\newtheorem{pro}{Proposition}[section]
\newtheorem{thm}[pro]{Theorem}
\newtheorem{lem}[pro]{Lemma}
\newtheorem{cor}[pro]{Corollary}
\theoremstyle{definition}
\newtheorem{notation}[pro]{Notation}
\theoremstyle{remark}
\def\kt{\ensuremath{K^{\rm tw}}}
\def\ktp{\ensuremath{K^{\rm tw}_+}}
\def\ktm{\ensuremath{K^{\rm tw}_-}}
\def\kn{\ensuremath{K^n}}
\def\krsn{\ensuremath{K_{r,s}^n}}
\def\k0{\ensuremath{K^0}}
\def\gr{\ensuremath{{\rm gr}_t}}
\newcommand{\X}{\text{\sffamily X}}
\title[The spectrum of the growth rate is infinite]{The spectrum of the growth rate of the tunnel number is infinite}
\date{\today}
\address{Department of Mathematics,  University of Miami, Coral Gables, FL, 33146}
\email{k.baker@math.miami.edu}
\address{Department of Mathematics, Nara Women's University
Kitauoya Nishimachi, Nara 630-8506, Japan}
\email{tsuyoshi@cc.nara-wu.ac.jp}
\address{Department of Mathematical Sciences, University of
Arkansas, Fayetteville, AR 72701}
\email{yoav@uark.edu}
\author{Kenneth L Baker}
\author{Tsuyoshi Kobayashi}
\author{Yo'av Rieck}
\thanks{KB and YR would like to thank Nara Women's University for their hospitality during the development of this article.  
This work was partially supported by grants from the Simons Foundation (\#209184 to Kenneth L. Baker and \#283495 to Yo'av Rieck).  
TK was supported by Grant-in-Aid for scientific research, JSPS grant number 00186751.}
\begin{document}

\subjclass[MCS2010]{57M99, 57M25}%
\keywords{3-manifold, knots, Heegaard splittings, tunnel number}%

\date{\today}%
%\dedicatory{}%
%\commby{}%
% ----------------------------------------------------------------
\begin{abstract}
For any $\epsilon > 0$ we construct a hyperbolic knots $K \subset S^{3}$
for which $1 - \epsilon < \gr(K) < 1$.  This shows that the spectrum
of the growth rate of the tunnel number is infinite.
\end{abstract}
\maketitle
% ----------------------------------------------------------------

\section{Introduction}
\label{sec:introduction}

By {\it manifold} we mean a compact connected orientable 3-manifold 
(the terms and notation used in this paper are introduced in the next section).
Let $K$ be a knot in a closed manifold $M$.  We denote the connected sum
of $n$ copies of $K$ by $nK$.  We use $E(\cdot)$ for knot or link exterior and $g(\cdot)$ for Heegaard genus.
In~\cite{growthrate1} Kobayashi and Rieck defined the growth rate of the tunnel 
number of a knot $K$ to be:
\[\gr(K) = \limsup_{n \to \infty} \frac{g(E(nK)) - ng(E(K)) + n-1}{n-1}\]
$K$ is called {\it admissible} if $g(E(K)) > g(M)$; note that every knot in the 3-sphere $S^{3}$
is admissible.  In~\cite{growthrate1} it was shown that for an admissible knot $K$, 
$\gr(K) < 1$, and $\gr(K) = 1$ otherwise.  The concept of growth rate proved to be useful in~\cite{MC1}
and~\cite{MC2} where it was used to construct counterexamples to Morimoto's Conjecture~\cite{morimoto};
however, calculating the growth rate proved to be very elusive.  The only numbers known to be
in the spectrum of the growth rate are:
\begin{description}
\item[0] torus knots and 2-bridge knots have growth rate 0~\cite{growthrate1}.
\item[1] as mentioned above, inadmissible knots have growth rate 1~\cite{growthrate1}.
\item[-1/2] Kobayashi and Saito constructed knots with growth rate $-1/2$~\cite{kobayashisaito}.
\item[1/2]  The knots constructed by Morimoto
Sakuma and Yokota in~\cite{MSY} have growth rate $1/2$~\cite{growthrate2}.
\end{description}

The goal of this paper is to construct knots whose growth rate can be estimated.
We prove:

\begin{thm}
\label{thm:main}
For every $\epsilon>0$ there exists a hyperbolic knot $K \subset S^3$
so that
\[1 - \epsilon < \gr(K) < 1\]
\end{thm}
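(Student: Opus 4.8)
\emph{Strategy.} The inequality $\gr(K)<1$ comes for free: every knot in $S^3$ is admissible, so it is exactly the theorem of \cite{growthrate1} quoted above. Thus, for a prescribed $\epsilon>0$, the whole problem is to exhibit a hyperbolic knot $K$ with $\gr(K)>1-\epsilon$. Observe first that placing a minimal unknotting tunnel system of $K$ inside each of the $n$ connected summands of $nK$ and adding one further tunnel across each of the $n-1$ connected-sum balls produces an unknotting tunnel system for $nK$; hence $t(nK)\le n\,t(K)+(n-1)$, that is, $g(E(nK))\le n\,g(E(K))$. Consequently
\[\gr(K)=\limsup_{n\to\infty}\frac{g(E(nK))-n\,g(E(K))+n-1}{n-1}\le 1,\]
and it suffices to prove a reverse estimate of the form $g(E(nK))\ge n\,g(E(K))-\delta(n)$ with $\delta(n)<\epsilon(n-1)$ for infinitely many $n$. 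Everything therefore hinges on forcing the Heegaard genus of $E(nK)$ to stay close to the trivial upper bound $n\,g(E(K))$.

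\emph{The knot.} I would assemble $K$ from a single ``block'': a knotted arc in a $3$-ball, i.e.\ a tangle. Chaining $n$ copies of the block forms $nK$ in such a way that $E(nK)$ decomposes as
\[E(nK)=\mathcal{E}_1\cup_{A_1}\mathcal{E}_2\cup_{A_2}\cdots\cup_{A_{n-1}}\mathcal{E}_n,\]
a linear union of $n$ copies $\mathcal{E}_i\cong E(K)$ glued along the essential meridional annuli $A_1,\dots,A_{n-1}$ arising from the connected-sum spheres. The block must be designed so that: (i) $E(K)$ is hyperbolic --- imposed either by a Thurston-style hyperbolic tangle replacement (or Dehn filling of an auxiliary cusp) compatible with (ii)--(iii), or by starting from a hyperbolic block; (ii) $E(K)$ contains no essential surface other than the evident annuli and tori, so that every incompressible surface in $E(nK)$ can be isotoped into a neighbourhood of the $A_i$; and (iii) the minimal-genus Heegaard splittings of $E(K)$ are quantitatively far from meridionally primitive with respect to the annulus $A\subset\partial E(K)$ --- concretely, at least $N=N(\epsilon)$ stabilizations are needed before a meridionally primitive spanning arc of $A$ appears in one of the two handlebodies. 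Property (iii) is what will make the connected-sum degeneration slow, with $N$ taken large; building a block that enjoys (i)--(iii) simultaneously, in the spirit of the families appearing in \cite{growthrate2}, is already a substantial part of the work.

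\emph{The lower bound on $g(E(nK))$ --- the crux.} Let $\Sigma$ be a minimal-genus Heegaard splitting of $E(nK)$. Untelescope it, relative to the family $\{A_i\}$, into a thin (generalized) Heegaard splitting, using the thin-position technology for annular and toroidal decompositions from \cite{growthrate1,growthrate2}. The resulting thin surfaces are incompressible, so by (ii) they can be isotoped onto the annuli $A_i$; after this alignment, $\Sigma$ is exhibited as an amalgamation of Heegaard-type decompositions of the $n$ pieces $\mathcal{E}_i\cong E(K)$, and a genus count across the decomposition yields
\[g(E(nK))\ \ge\ n\,g(E(K))-D(n),\]
where $D(n)$ counts the junctions at which the amalgamated splitting was destabilized by using a meridionally primitive spanning arc of some $A_i$. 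By (iii), each such destabilization requires tying together a run of at least $N$ consecutive blocks and lowers the genus by at most a constant $C$ depending only on the block, so $D(n)\le C\lceil n/N\rceil$; hence, once $N$ is large enough that $C/N<\epsilon$,
\[\gr(K)\ \ge\ \limsup_{n\to\infty}\Big(1-\frac{C\lceil n/N\rceil}{n-1}\Big)\ =\ 1-\frac{C}{N}\ >\ 1-\epsilon.\]
Together with the automatic bound $\gr(K)<1$, this proves the theorem.

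\emph{Main obstacle.} The delicate point is the alignment-and-accounting step above: making rigorous that the untelescoped splitting genuinely localizes to the $n$ blocks, that the only mechanism by which a block's genus contribution can drop below $g(E(K))$ is the controlled meridionally primitive move, and that this move cannot be iterated cheaply along the chain. Pinning this down is precisely what forces the hand-built control (ii)--(iii) on the incompressible surfaces and on the minimal Heegaard splittings of $E(K)$; constructing an explicit block with those properties --- and verifying that the hyperbolization in (i) leaves them undisturbed --- is where essentially all of the work lies.
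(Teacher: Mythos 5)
The half $\gr(K)<1$ is handled the same way the paper handles it (admissibility plus \cite{growthrate1}), so that part is fine. The other half, $\gr(K)>1-\epsilon$, is where your write-up has a genuine gap: it is a strategy outline in which both of the essential steps are deferred rather than carried out. First, no knot is actually constructed; properties (i)--(iii) of the ``block'' are postulated, and you yourself concede that realizing them simultaneously --- in particular producing, for each $\epsilon$, a hyperbolic block whose exterior contains no unexpected essential surfaces and whose minimal Heegaard splittings are at least $N(\epsilon)$ stabilizations away from meridional primitivity --- ``is where essentially all of the work lies.'' Second, even granting such a block, the quantitative accounting is asserted, not proved: the claims that every drop of $g(E(nK))$ below $n\,g(E(K))$ is effected by a meridionally primitive spanning arc, that each such move consumes a run of $N$ consecutive summands, and that it saves only a bounded amount $C$ of genus, constitute precisely the hard content of the degeneration analysis. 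Moreover, the invariant that is known to control this degeneration (for m-small knots, by \cite{growthrate2}) is the bridge index of $K$ with respect to spheres and unknotted tori, i.e.\ $b_0(K)$ and $b_1(K)$, not a stabilization distance to meridional primitivity; you give no argument that your $N$ can be made large, nor that your $D(n)$ bound follows from it.

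By contrast, the paper sidesteps the entire thin-position/amalgamation analysis you are attempting to redo by quoting the exact formula $\gr(K)=\min\bigl\{1-\tfrac{1}{b_1(K)},\,1-\tfrac{2}{b_0(K)}\bigr\}$ of \cite{growthrate2} for m-small knots with $g(E(K))=2$. This reduces Theorem~\ref{thm:main} to Theorem~\ref{thm:knots}: produce hyperbolic, small (hence m-small, by Lemma~\ref{lem:SamllIsMSmall}) knots with $g(E(K))=2$ and $b_1\to\infty$. That is achieved by an explicit family of curves on the fiber of the trefoil related by annular twisting, with smallness from Baker's classification of essential surfaces \cite{baker}, the lower bound on $b_1$ from Baker--Gordon--Luecke \cite{BGL}, and hyperbolicity from a surgery description on the minimally twisted seven-component chain link. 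If you wish to pursue your route instead, the missing ingredients are exactly an explicit block satisfying (i)--(iii) with $N$ arbitrarily large and a rigorous version of the alignment-and-accounting lemma; as written, neither is supplied.
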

As an immediate corollary we get:
\begin{cor}
The spectrum of the growth rate is infinite.
\end{cor}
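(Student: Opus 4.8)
We sketch a proof of Theorem~\ref{thm:main}; the Corollary is then immediate, since the growth rates we produce approach $1$ as $\epsilon\to0$ and hence comprise an infinite set. Every knot in $S^3$ is admissible, so the inequality $\gr(K)<1$ is supplied by~\cite{growthrate1}, and the whole content of the theorem is the lower bound: it suffices to construct, for each integer $m\ge2$, a single hyperbolic knot $K=K_m\subset S^3$ with $\gr(K_m)\ge1-\tfrac1m$, and then take $m>1/\epsilon$. Since for a knot in $S^3$ one has $g(E(nK))-ng(E(K))+n-1=t(nK)-nt(K)$, this amounts to arranging that $t(nK_m)$ stay within $\tfrac{n-1}{m}+O(1)$ of its a priori maximum $n\,t(K_m)+(n-1)$.

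The plan is to build $K_m$ from a ``building block'' tangle $\tau$ carrying a complexity parameter $m$ --- concretely, by chaining $m$ copies of a fixed tangle and capping off to a knot in $S^3$, or by inserting $m$ twists into a fixed tangle region --- chosen so that the surfaces along which the assembly is glued are incompressible and of distance growing with $m$, both in $E(K_m)$ and in the Heegaard splittings that arise. Hyperbolicity of $K_m$ then follows from Thurston's hyperbolization once one checks that the assembled exterior is irreducible, atoroidal and anannular, which holds for generic choices of $\tau$ and of the gluing maps. The first of two Heegaard-genus estimates is an \emph{exact} value $g(E(K_m))=G_m$: an explicit Heegaard splitting assembled from the blocks gives $g(E(K_m))\le G_m$, while the distance hypothesis, through the theorems of Hartshorn and of Scharlemann--Tomova, forces every incompressible surface and every unstabilized Heegaard surface in $E(K_m)$ to be large, so that no splitting of smaller genus exists.

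The second estimate --- and this is the heart of the matter --- is a lower bound
\[ g(E(nK_m))\ \ge\ nG_m-\tfrac{n-1}{m}-O(1), \]
whose point is that the ``savings per summand'' available to a Heegaard splitting of the connected sum tends to $0$ as $\tau$ becomes more complicated. To prove it, decompose $E(nK_m)$ along its $n-1$ swallow--follow tori into $n$ copies of $E(K_m)$, and within each copy along the $m$ gluing surfaces; take a minimal-genus Heegaard surface $\Sigma$ of $E(nK_m)$ and put it in thin position with respect to this whole collection of incompressible surfaces. Untelescoping $\Sigma$ into a generalized Heegaard splitting, the pieces it induces on the individual blocks cannot collapse, because the blocks and their gluings are high distance; and the delicate part is that a Heegaard splitting of the connected sum can realize a saving, relative to the block-by-block count, only through slide-and-absorption moves, each of which must ``untwist'' a stretch of $\Omega(m)$ consecutive high-distance gluings, so that at most $\approx\tfrac{n-1}{m}$ savings are possible in all. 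Together with the exact value $G_m$ this gives
\[ \gr(K_m)=\limsup_{n\to\infty}\frac{g(E(nK_m))-nG_m+n-1}{n-1}\ \ge\ 1-\tfrac1m. \]

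I expect the dominant obstacle to be precisely this lower bound: one must rule out a Heegaard splitting of $nK_m$ that amalgamates economically across many blocks, or across many summands, at once and so beats the block-by-block count. This is exactly where the distance hypothesis on $\tau$ is indispensable, and where the thin-position and untelescoping machinery --- in the form used in~\cite{growthrate2} to show that the knots of~\cite{MSY} have growth rate $\tfrac12$ --- must be carried through in the present, more elaborate setting. A secondary but real point of care is that the explicit splitting of $E(K_m)$ has to be shown genuinely minimal, so that the number $G_m$ appearing in both estimates is literally one and the same.
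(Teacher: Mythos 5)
Your deduction of the Corollary from Theorem~\ref{thm:main} is fine, and your reduction to producing, for each $m$, a hyperbolic knot $K_m$ with $\gr(K_m)\ge 1-\tfrac1m$ is the right first step. But the argument you then offer for Theorem~\ref{thm:main} has a genuine gap at exactly the point you flag as ``the heart of the matter'': the lower bound $g(E(nK_m))\ge nG_m-\tfrac{n-1}{m}-O(1)$ is asserted, not proved. The claim that every slide-and-absorption saving in an untelescoped minimal splitting of $E(nK_m)$ must ``untwist a stretch of $\Omega(m)$ consecutive high-distance gluings'' is not backed by any mechanism: thin position against the swallow--follow tori and the block surfaces controls how $\Sigma$ meets each piece, but nothing in Hartshorn or Scharlemann--Tomova converts surface distance into a quantitative cap of $\tfrac{n-1}{m}$ on the total genus deficit across $n$ summands. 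Indeed, no argument of this type is known to give a \emph{lower} bound on $g(E(nK))$ that is sharp to within $o(n)$ of the upper bound, and the knots themselves (the tangle $\tau$, the gluing maps, the value of $G_m$) are never actually constructed, so there is nothing to run the machine on. As written, the proposal is a program, not a proof.

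The paper takes a different and much shorter route that sidesteps this entirely: it never estimates $g(E(nK))$ directly. Instead it invokes the main theorem of~\cite{growthrate2}, which for an m-small knot in $S^3$ with $g(E(K))=2$ gives the growth rate as an exact function of the bridge index $b_0(K)$ and the torus bridge index $b_1(K)$. The problem then becomes: construct hyperbolic, small (hence m-small) tunnel-number-one knots with $b_1\to\infty$. This is done with curves $[r,-s,n]$ on the fiber of the trefoil; smallness comes from Baker's classification of closed essential surfaces in such exteriors~\cite{baker}, the divergence of $b_1$ from the annular-twist machinery of Baker--Gordon--Luecke~\cite{BGL}, and hyperbolicity from a surgery description on the minimally twisted $7$-chain link. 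Note also that the paper's knots have $g(E(K))=2$ fixed for all parameters --- the complexity is carried by the bridge indices, not by the Heegaard genus --- whereas your construction makes $G_m$ grow with $m$, which puts you outside the setting in which the formula of~\cite{growthrate2} is applied here and leaves you to prove the connected-sum genus estimate from scratch. To repair your approach you would either need to prove that estimate (a substantial open-ended project) or reroute through a bridge-index computation as the paper does.
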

To estimate the growth rate we apply~\cite{growthrate2} which we now explain;
we will only use the results of~\cite{growthrate2}
when $K$ is a knot in $S^3$ and $g(E(K)) = 2$.
Let $b_0(K)$ be the bridge index of $K$ and $b_1(K)$ be the torus bridge index of $K$;
for a detailed discussion see~\cite{growthrate2}.\footnote{In~\cite{growthrate2}
$b_0(K)$ is denoted by $b_2^*(K)$ and $b_1(K)$ is denoted by $b_1^*(K)$
but this notation is not needed in our restricted setting.}
It is easy to see that $b_0(K) > b_1(K)$ when $K$ is not the unknot.  
The main result of~\cite{growthrate2} is that if $K$ is m-small 
(for the definitions of m-small see the next section) and $g(E(K)) = 2$ then 
\[\gr(K) = \min \left\{1- \frac{1}{b_1(K)} , 1- \frac{2}{b_0(K)}\right\}\]
We prove Theorem~\ref{thm:main} by constructing m-small knots 
and bounding their bridge indices below.  For smallness (which implies
m-smallness for knots in $S^{3}$) we use Baker~\cite{baker}
and for the lower bound on the bridge indices
we apply Baker--Gordon--Luecke~\cite{BGL}.  We prove the following:
\begin{thm}
\label{thm:knots}
For $n=1,2,\dots$, there exists a hyperbolic knot $\kn \subset S^3$ 
so that the following three conditions hold:
\begin{enumerate}
\item $g(E(\kn)) = 2$.
\item $\kn$ is small (and hence m-small).
\item $\lim_{n \to \infty} b_1(\kn) = \infty$.
\end{enumerate}
\end{thm}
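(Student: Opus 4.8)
The plan is to construct $\kn$ as a knot admitting a carefully chosen unknotting tunnel system so that its exterior has a genus-2 Heegaard splitting, while simultaneously forcing the knot to be small and to have large torus bridge index. Following the strategy outlined in the introduction, I would start from the Baker--Gordon--Luecke framework in \cite{BGL}: there one produces knots in $S^3$ with an explicit lower bound on their bridge number (and, more to the point here, on the bridge number with respect to the genus-1 Heegaard surface of $S^3$, i.e.\ the torus bridge index $b_1$) coming from the structure of essential surfaces or from Dehn surgery descriptions. The idea is to pick, for each $n$, such a knot so that the $\mathrm{BGL}$-type bound gives $b_1(\kn) \geq f(n)$ for some function $f$ with $f(n) \to \infty$; this will immediately yield condition~(3). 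The knots in \cite{BGL} are obtained by Dehn filling on a fixed hyperbolic manifold (a suitable link exterior), so for all but finitely many fillings the resulting $\kn$ is hyperbolic, giving the hyperbolicity claim.

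Next I would arrange condition~(1), $g(E(\kn)) = 2$. The natural way is to exhibit $\kn$ as a tunnel number one knot — equivalently, to present $E(\kn)$ with an explicit genus-2 Heegaard splitting — and then argue that the genus cannot drop to~$1$, since $g(E(K)) = 1$ forces $K$ to be a torus knot, whose torus bridge index is small and hence incompatible with~(3) for large $n$. So the genus is exactly $2$ once $n$ is large enough. Concretely, the knots should be built so that one can see a single unknotting tunnel directly from the construction (for instance, the knots in \cite{BGL} or related tunnel-number-one families coming from the Berge/primitive-type or from double branched cover constructions carry visible tunnels), and then tunnel number one plus non-torus gives $g(E(\kn)) = 2$.

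For condition~(2), smallness, the plan is to invoke Baker~\cite{baker}: that paper provides families of small knots in $S^3$, again typically realized via Dehn surgery on hyperbolic link exteriors, where smallness of the filled manifold is established by controlling essential surfaces under Dehn filling (no essential surface in the filled exterior, using incompressibility/boundary-incompressibility estimates and the fact that essential surfaces in the unfilled manifold either remain essential and meet the filling slope in a controlled way, or are compressible). The main work is to verify that the \emph{same} family of knots can be chosen to simultaneously satisfy all three conditions — that the smallness construction of \cite{baker} and the bridge-index lower bound construction of \cite{BGL} are compatible, i.e.\ that they arise (or can be made to arise) from fillings of a common parent manifold, or that one construction's knots fall within the other's hypotheses. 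I would set this up so that $\kn$ is obtained by Dehn surgery with slope depending on $n$ on a single fixed link $L$, chosen so that \cite{baker} applies to give smallness and \cite{BGL} applies to give $b_1(\kn) \to \infty$, and so that a tunnel is visible throughout the family.

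The main obstacle I expect is precisely this compatibility: ensuring one explicit family of knots inherits all three properties at once, rather than each property from a different (incompatible) construction. In particular, showing the $b_1$ lower bound survives together with smallness and with the genus-$2$ Heegaard structure requires that the essential-surface arguments behind \cite{baker} and \cite{BGL} do not interfere, and that the Heegaard genus is genuinely pinned at $2$ (using the torus-knot dichotomy to rule out genus~$1$). Once the parent link $L$ and the slopes are identified, the hyperbolicity and the genus bound follow from standard Dehn-filling arguments (Thurston's hyperbolic Dehn surgery, plus the $g(E(K))=1 \Leftrightarrow$ torus knot fact), and conditions~(1)--(3) drop out; feeding $\kn$ into the main theorem of \cite{growthrate2} then gives $\gr(\kn) = \min\{1 - 1/b_1(\kn),\, 1 - 2/b_0(\kn)\} \to 1$, which is what Theorem~\ref{thm:main} needs.
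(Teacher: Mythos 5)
Your proposal correctly reproduces the strategy that the paper announces in its introduction --- Baker \cite{baker} for smallness, Baker--Gordon--Luecke \cite{BGL} for the torus bridge index, a visible tunnel for the genus bound, and Dehn filling of a fixed hyperbolic link for hyperbolicity --- but it stops at the point where the actual proof begins. The substance of the theorem is the construction of one explicit family to which all of these results apply simultaneously, and that construction is absent from your write-up. Concretely, the paper takes $K^n = K^n_{r,s}$ to be the simple closed curve on the once-punctured-torus fiber $T$ of the trefoil whose homology class corresponds to the continued fraction $[r,-s,n]$ (with $r \geq 3$, $s \geq 2$ fixed). Each property is then read off from this single description: lying on a fiber of the trefoil places $K^n$ on a genus-$2$ Heegaard surface of $S^3$, giving $g(E(K^n)) \leq 2$ directly (your torus-knot dichotomy is indeed the right way to upgrade $\leq 2$ to $=2$ once hyperbolicity is known); Baker's classification of closed essential surfaces in exteriors of such continued-fraction knots reduces smallness to a finite arithmetic check on the entries $r$, $-s$, $n$, which is where the side conditions $n \geq 2$ and $n \neq r, r-1$ come from; and hyperbolicity comes from exhibiting $E(K^n_{r,s})$ as a Dehn filling of the minimally twisted seven-component chain link exterior, not of the links appearing in \cite{BGL}.

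The most serious gap is in your treatment of condition (3). The reason \cite{BGL} applies is not merely that the $K^n$ admit surgery descriptions: it is that $K^n$ is obtained from $K^0$ by twisting $n$ times along an annulus $\widehat R$ spanned between push-offs of the curve $K^{\rm tw} = [r,-s]$ on nearby fibers, which rests on the homological identity $[K^n] = n[K^{\rm tw}] + [K^0]$. Having set this up, one must still (i) verify that the resulting three-component link is \emph{caught}, which the paper does by showing that the slope induced on $K^{\rm tw}$ by the fiber is not the $0$-slope via a Seifert-matrix/Alexander-polynomial argument, and (ii) rule out the two exceptional conclusions of Corollary~1.3 of \cite{BGL}: that $\widehat R$ embeds in an unknotted torus (excluded by a genus computation showing $K^{\rm tw}$ is knotted, combined with a doubly-primitive/lens-space surgery argument and uniqueness of prime decomposition) and that a certain essential annulus exists (excluded because it would produce an immersed torus meeting $K^0$ once in a homology sphere). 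Without these steps the conclusion $b_1(K^n) \to \infty$ does not follow. Your proposal explicitly flags the compatibility of the three constructions as ``the main obstacle'' but does not resolve it; resolving it is precisely the content of the paper's proof, so as written this is a plan rather than a proof.
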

We now show how Theorem~\ref{thm:main} follows from Theorem~\ref{thm:knots}.
As \kn\ are m-small we may apply~\cite{growthrate2}; 
since $b_0(\kn) > b_1(\kn)$ and $\lim_{n \to \infty} b_1(\kn) = \infty$ we get: 
\[\lim_{n \to \infty} \gr(\kn) = \lim_{n \to \infty} \left(\max\left\{1 - \frac{1}{b_1(\kn)} , 1 - \frac{2}{b_0(\kn)}\right\} \right) = 1\]
%\[\lim_{n \to \infty} \gr(\kn) = \lim_{n \to \infty} \max\left\{1 - \frac{1}{b_1(\kn)} , 1 - \frac{2}{b_0(\kn)}\right\}  = 1\]
On the other hand, since any knot in $S^3$ is admissible, by~\cite{growthrate1}
$\gr(\kn) < 1$.  This shows that Theorem~\ref{thm:main} follows from Theorem~\ref{thm:knots}.
The remainder of this paper is devoted to the proof of Theorem~\ref{thm:knots}.  

The structure of this paper is as follows:
\begin{enumerate}
\item[\S\ref{sec:prelims}] We describe some background material and the notation that we follow.
\item[\S\ref{sec:construction}] We construct the knots $\krsn$.  The construction depends
on three parameters ($r$,$s$, and $n$).  Since we usually think of $r$ and $s$ as fixed 
we often suppress $r$ and $s$ from the notation and denote the knots by \kn.
In Lemma~\ref{lem:GenusTwo} we show that $g(E(\kn)) \leq 2$.
\item[\S\ref{sec:small}] We apply Baker's~\cite{baker} and conclude 
that for fixed $r,s$ and sufficiently large $n$, $\krsn$ is small.  See Proposition~\ref{pro:small}.
\item[\S\ref{sec:bridge}] We apply Baker--Gordon--Luecke~\cite{BGL}
and conclude that for fixed $r,s$, $\lim_{n \to \infty} b_1(\kn) = \infty$.  See Proposition~\ref{pro:bridge}.
\item[\S\ref{sec:hyperbolicity}] We prove that for sufficiently large $r$, $s$, and $n$, \krsn\
is hyperbolic.  See Proposition~\ref{pro:hyperbolicity}.
\end{enumerate}
The combination of Lemma~\ref{lem:GenusTwo}, Proposition~\ref{pro:small}, Proposition~\ref{pro:bridge},
and Proposition~\ref{pro:hyperbolicity} proves Theorem~\ref{thm:knots}.

\section{Preliminaries}
\label{sec:prelims}

By {\it manifold} we mean compact orientable connected 3-dimensional manifold.
By {\it surface} we mean a compact orientable connected 2-dimensional manifold.
All surfaces are assumed to be properly embedded.  A surface embedded in a manifold 
is called {\it essential} if it is incompressible, boundary incompressible, not boundary 
parallel, and not a ball bounding sphere.  A manifold is called {\it small}
if it admits no closed essential surfaces.  We use $N(\cdot)$ for an open normal
neighborhood, $\partial$ for boundary, and $\mbox{cl}$ for closure.

By an (unoriented) {\it knot} $K$ in a 3-manifold $M$ we mean 
a smooth embedding of the circle into $M$, considered up to isotopy and reversal
of orientation of the circle.  
The exterior of $K$, denoted $E(K)$, is $M \setminus N(K)$.  
There is a unique isotopy class of simple closed curves on $\partial N(K)$,
called the {\it meridian} of $K$, that bounds a disk in the solid torus $\mbox{cl}(M \setminus E(K))$;
any simple closed curve in that isotopy class is called a {\it meridian}.
A knot $K \subset M$ is called {\it small} if $E(K)$ is small.  A knot is called
{\it m-small} (which stands for {\it meridionally small}) if $E(K)$ does not admit an essential 
surface with non-empty boundary, whose boundary consist of meridians. 
By~\cite[Theorem~2.0.3]{cgls} we have:
\begin{lem}
\label{lem:SamllIsMSmall}
Any small knot in $S^{3}$ is m-small.
\end{lem}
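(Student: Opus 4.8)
The plan is to deduce the lemma from \cite[Theorem~2.0.3]{cgls}, which in contrapositive form states exactly that if the exterior of a knot in $S^3$ contains an essential surface with non-empty meridional boundary then it contains a closed essential surface; hence a small knot in $S^3$, having no closed essential surface, can admit no essential surface with meridional boundary, i.e.\ is m-small. For completeness I will indicate the shape of the argument underlying that theorem.

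First I would argue by contradiction: assuming $K \subset S^3$ is small but not m-small, fix an essential surface $S \subset E(K)$ whose boundary is a non-empty collection of meridians. Each boundary curve bounds a meridian disk in the solid torus $N(K)$, and since the meridian generates $H_1(E(K)) \cong \mathbb{Z}$ the components of $\partial S$, oriented as $\partial S$, must cancel in oppositely oriented pairs, so there is an even number of them; capping $S$ off with the corresponding meridian disks then produces a closed orientable surface $\widehat S \subset S^3$ meeting $K$ transversely in $|\partial S|$ points. Being closed and orientable in $S^3$, $\widehat S$ is separating, and it is either a $2$-sphere or compressible in $S^3$.

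Next I would convert $\widehat S$ into a closed essential surface in $E(K)$. The idea is to perform ambient compressions of $\widehat S$ along compressing disks in $S^3$ that meet $K$ in as few points as possible, together with ambient isotopies, and to choose the resulting closed surface --- still written $\widehat S$ --- so that $|\widehat S \cap K|$ is minimal among all surfaces obtained this way. A standard innermost-disk and minimal-position analysis, using that $S$ is incompressible and boundary-incompressible in $E(K)$ and that $E(K)$ is irreducible, should then force $\widehat S \cap E(K)$ to be incompressible in $E(K)$, to be no ball-bounding sphere, and to be non-boundary-parallel, so that it is a closed essential surface in $E(K)$, contradicting smallness. The hard part is precisely this last analysis --- in particular ruling out the boundary-parallel case and controlling how compressions in $S^3$ interact with $K$ --- and since this is exactly the content of \cite[Theorem~2.0.3]{cgls}, in the paper I would simply invoke that theorem.
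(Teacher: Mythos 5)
Your proposal is correct and takes the same route as the paper, which proves the lemma purely by citing \cite[Theorem~2.0.3]{cgls}; your additional sketch of the capping-off and ambient-compression argument behind that theorem is accurate in outline but not needed, since the paper (like you, ultimately) simply invokes the cited result.
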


\section{The knots}
\label{sec:construction}

The construction given in this section is based on Baker's~\cite{baker}
and we refer the reader to that paper for a detailed discussion.

\begin{notation}
The following notation is fixed for the remainder of the paper:
let $3_1$ denote the left handed trefoil.  It is well known that its exterior, $E(3_1)$,
is fibered over the circle with fiber a once punctured torus.
Let $T$ be one such fiber (so $T$ is the intersection of 
a genus one Seifert surface for $3_1$ with $E(3_1)$).
Let $a$ and $b$ be the oriented
simple closed curves on $T$ given by Figure~\ref{fig:FiberT}.
\begin{figure}[h!]
\psfrag{a}[l][l]{$a$}
\psfrag{b}[l][l]{$b$}
\includegraphics[height=2in]{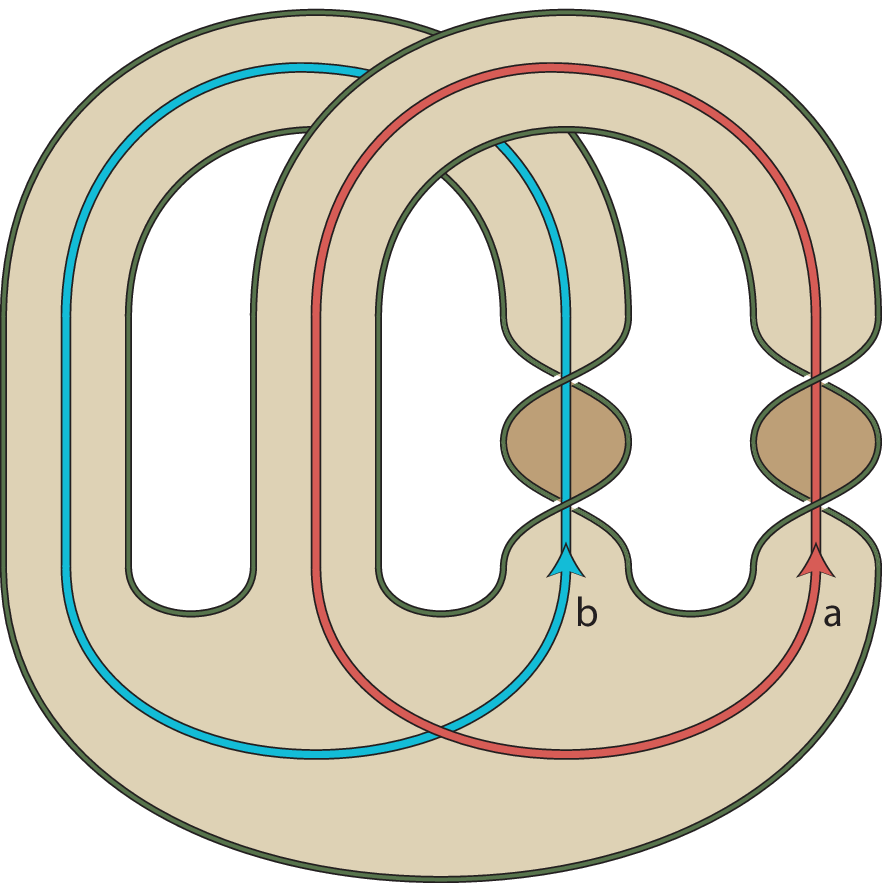} \qquad \qquad \includegraphics[height=2in]{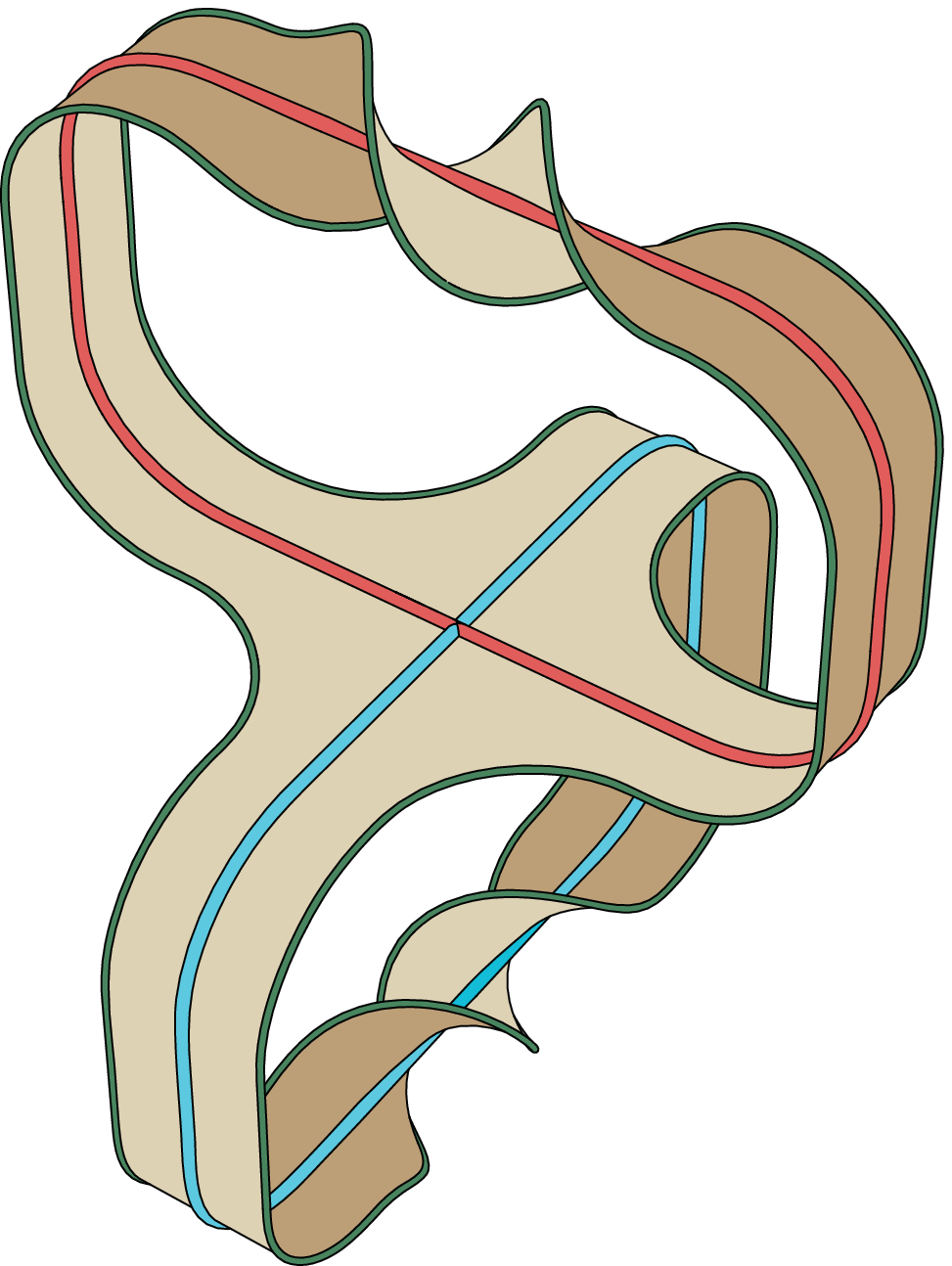}
\caption{The fiber $T$ with curves $a$ and $b$ that give a basis for homology.  The fiber is presented again 
on the right for comparison with the link $\mathcal{L}_7$ in Figure~\ref{fig:surgerydiagram}.}
\label{fig:FiberT}
\end{figure}
Any simple closed curve $K$ on $T$ is determined by its class in $H_1(T;\mathbb Z)$.  
Since $a$ intersects $b$ transversally exactly once, the classes $[a]$ and $[b]$ form a 
basis for $H_1(T;\mathbb Z)$.  Thus the homology class of $K$ 
can be written as $m[a]+n[b]$, for some relatively prime
integers $m$, $n$.  The coefficients $m$ and $n$ are almost unique: the only
other possibility is $-m$ and $-n$ (recall that we are considering unoriented knots).
We emphasize that we are considering the isotopy class of $K$
in $T$; it is distinctly possible for non isotopic curves on $T$ to be isotopic in $E(3_1)$.  As is customary
we identify the isotopy classes of $K$ (as an unoriented curve) with $m/n \in \mathbb Q \cup \{1/0\}$.  We will  
use continued fraction expansion of rational numbers with the following convention:
\[ [b_1,\dots,b_n] = \frac{1}{b_1 - \frac{1}{b_2 - \cdots - \frac{1}{b_n}}} \]
To simplify our discussion we will refer to a knot $K$ that corresponds to $m/n$
as  ``the knot $[b_1,\dots,b_n]$'',  where $m/n = [b_1,\dots,b_n]$.
\end{notation}

Fix integers $r \geq 3$ and $s \geq 2$.  Then for each integer $n$, let $K^n = K^n_{r,s}$ be the knot $[r,-s,n]$.  Most of the discussion in this paper holds
for any such $r, s, n$, except in Section~\ref{sec:small} where we need $n\geq2$ and $n\neq r, r-1$ in order to prove that
$ K^n_{r,s}$ is small and in Section~\ref{sec:hyperbolicity} where we need to assume that $r$, $s$, and $n$ are sufficiently large in order to prove that $K^n_{r,s}$ is hyperbolic.

The knots $K^n$  are related by Dehn twists along a curve  in $T$, say
 $K^{\rm tw} = [r,-s]$, as we now explain.   
 By expanding the continued fractions for $[K^{\rm tw}]$ and $[K^0]$
one sees that $[K^{\rm tw}] = s[a]+(rs+1)[b]$ and $[K^0] = [a]+r[b]$.
The geometric intersection number of these knots is given by the absolute value of the following determinant:
\[\det \left( \begin{matrix}s & rs+1 \\ 1 & r \end{matrix} \right) = sr - (rs+1) = -1\] 
Hence  $[K^{\rm tw}]$ and $[K^0]$ intersect once.
By expanding the continued fraction of $[K^n]$ we see that
$[K^n] = (sn+1)[a]+(rsn+r+n)[b]$; it follows that $[K^n] = n[K^{\rm tw}] + [K^0]$.  
We use this Dehn twist relationship to give an alternative description of \kn\  that will enable us to apply~\cite{BGL} to obtain a lower bound on the torus bridge indices of \kn.

\medskip

First let us recall the realization of Dehn twists along a curve in a surface by Dehn surgery;
see Definition~1.1 of~\cite{BGL}.
Given a curve $c$ in an oriented surface $F$ in a  $3$--manifold $M$, let $c_- \cup c_+$ 
be the link of the negative and positive push-offs of $c$ and $\widehat R$ be the obvious annulus 
bounded by $c_- \cup c_+$ such that $F \cap \widehat{R} = c$.
Then, using their framings as push-offs, $(\tfrac{1}{k}, -\tfrac{1}{k})$--surgery on $c_- \cup c_+$ effects a homeomorphism of $M$ with support in $N(\widehat{R})$ that restricts to the homeomorphism $\tau_c \colon F \to F$ of $k$ Dehn twists along $c$.  This homeomorphism of $M$ is called an {\em annular twist} or a {\em twist along an annulus}. Now, for the situation at hand:

\begin{notation}
\label{notation:HL1L2}
Let $T_-$ and $T_+$ be two distinct fibers
of $E(3_1)$, disjoint from the fiber $T$.
Parametrize the component of $E(3_1)$ 
cut open along $T_- \cup T_+$ that contains $T$
as $T \times [-1,1]$,
with $T_-$ corresponding to $T \times \{-1\}$, $T_+$ 
corresponding to $T \times \{1\}$, and $T$
corresponding to $T \times \{0\}$.  Let $K$ denote a copy of \k0\
on $T$, $\ktm$ a copy of \kt\ on $T_-$, and $\ktp$ a copy of \kt\ on $T_+$.  
Let $\widehat R$ be the annulus that corresponds to
$\kt\ \times [-1,1]$.  Then, because $K^n$ is obtained by Dehn twisting $K^0$ along $\kt$,  \kn\ is 
obtained from \k0\ by twisting $n$ times along $\widehat R$.   
This notation is chosen to be consistent with the notation 
used in~\cite{BGL}.
\end{notation}

The annulus $\widehat R$ and the slopes it defines on $\ktm$, $\ktp$ play a key role in~\cite{BGL}
and we will need three facts about them:

\begin{lem}\label{lem:knotgenus}
The knot $\kt = [r,-s]$ has genus $(s^2(r^2-r+1)-s)/2$.
\end{lem}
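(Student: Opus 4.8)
The plan is to compute the genus of $\kt = [r,-s]$ directly from its description as a curve on the once-punctured torus fiber $T$ of the left-handed trefoil. Recall we already know that $[\kt] = s[a] + (rs+1)[b]$ in $H_1(T;\mathbb{Z})$, so $\kt$ is a simple closed curve on $T$ whose homology class is $(p,q) = (s, rs+1)$ with respect to the basis $[a],[b]$. The key point is that $E(3_1)$ is fibered with fiber $T$, and a Seifert surface for $\kt$ can be built using the monodromy: there is a standard ``Stallings twist'' / fibered-knot argument that expresses a Seifert surface for a curve on a fiber in terms of the intersection pattern of its translates under the monodromy. Concretely, the monodromy $\varphi$ of the trefoil fibration acts on $H_1(T;\mathbb{Z}) \cong \mathbb{Z}^2$ by the matrix $\left(\begin{smallmatrix} 1 & -1 \\ 1 & 0 \end{smallmatrix}\right)$ (or its inverse, depending on orientation conventions for the left-handed trefoil); I would pin down the correct matrix by checking it against the known Alexander polynomial $t^2 - t + 1$ of $3_1$.

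The main computational tool I would use is the fact (see e.g.\ Gabai, or the surface-bundle construction in Baker's paper) that if $c$ is a simple closed curve on the fiber $F$ representing a primitive homology class, then $c$ bounds a Seifert surface in $E(3_1)$ obtained by tubing together copies of subsurfaces of the fiber, and the genus of that surface is controlled by $\sum_{i} \hat{i}(c, \varphi^i(c))$-type intersection data — or, more efficiently, by the degree of the Alexander polynomial of $\kt$ computed via the infinite cyclic cover. Since $E(3_1)$ fibers over $S^1$ with fiber $T$ and monodromy $\varphi$, the complement of $\kt$ (viewed inside $E(3_1)$, then inside $S^3$) has an Alexander polynomial computable from the action of $\varphi$ on $H_1$ of the relevant cover, and for a fibered situation the genus equals half the degree of the Alexander polynomial. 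So the strategy reduces to: (i) write down $\varphi_*$ explicitly; (ii) compute the geometric intersection numbers $c_k = \hat{i}(\kt, \varphi^k(\kt))$, which for a torus are just $|\det|$ of the corresponding $2\times 2$ integer matrices, i.e.\ $c_k = |q \cdot (\text{something}) - p \cdot (\text{something})|$ evaluated at $\varphi^k(p,q)$; (iii) sum these up with the right signs/combinatorics to get $2g$.

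Carrying this out: with $(p,q) = (s, rs+1)$, applying powers of $\varphi_* = \left(\begin{smallmatrix} 1 & -1 \\ 1 & 0 \end{smallmatrix}\right)$ and computing the determinants $\det\left(\begin{smallmatrix} p & q \\ p_k & q_k \end{smallmatrix}\right)$ where $(p_k,q_k) = \varphi_*^k(p,q)$, one gets a short list of intersection numbers (the monodromy has order $6$ on homology since $\varphi_*^6 = I$ — the trefoil monodromy is periodic — so there are only finitely many distinct translates to consider). Summing the resulting quantities, I expect to land on $2g = s^2(r^2 - r + 1) - s$, i.e.\ $\hat{i}(\kt,\varphi(\kt)) + \hat{i}(\kt,\varphi^2(\kt)) = s^2(r^2-r+1) - s$ or some such closed form, after the algebra collapses. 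The polynomial $r^2 - r + 1$ appearing here is a strong hint that the trefoil's $t^2 - t + 1$ is entering through $\varphi_*$, which corroborates the approach.

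The main obstacle I anticipate is getting the combinatorial/orientation bookkeeping exactly right: whether one uses $\varphi$ or $\varphi^{-1}$ (left- vs.\ right-handed trefoil), how the Seifert surface for $\kt$ decomposes relative to the fibers $T_-, T, T_+$, and precisely which intersection numbers enter the genus formula (there is a genuine subtlety in that $\kt$ may be isotopic in $E(3_1)$ to a curve on $T$ different from the obvious one, and in relating the Seifert genus in $S^3$ to data intrinsic to the fiber). A cleaner alternative that sidesteps the tubing combinatorics: realize $\kt = [r,-s]$ explicitly as a curve, compute its Alexander polynomial $\Delta_{\kt}(t)$ directly (it is a curve on a genus-one fiber of a fibered knot, so it is itself fibered with fiber an explicit surface, or its group has a nice presentation), verify $\deg \Delta_{\kt} = s^2(r^2-r+1) - s$, and invoke that $\kt$ is fibered (being a curve on the fiber of a fibered knot, hence itself fibered in $S^3$) so that genus $=\tfrac12 \deg \Delta$. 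I would write the proof along whichever of these two routes produces the least painful algebra, most likely the second, leaning on Baker's \cite{baker} for the identification of $\kt$ and its fiber structure.
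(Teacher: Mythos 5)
There is a genuine gap: both of your proposed routes stop at the point where the actual work happens, and the step you lean on to make route (ii) "clean" is not a valid general principle. A simple closed curve on the fiber surface of a fibered knot is \emph{not} automatically fibered in $S^3$, so you cannot invoke ``genus $=\tfrac12\deg\Delta$'' for $\kt$ without first proving fiberedness (or otherwise exhibiting a minimal genus Seifert surface); without that, the Alexander polynomial degree only gives the lower bound $2g \geq \deg\Delta$. In this specific case $\kt$ \emph{is} fibered, but the proof of that fact is precisely the content of the paper's argument, so route (ii) is circular in effect. Route (i) is worse off: there is no citable formula expressing the Seifert genus of a curve on a fiber as a sum of intersection numbers $\hat{i}(c,\varphi^k(c))$ of monodromy translates, and even if such bookkeeping produced the right number you would still owe an explicit Seifert surface realizing it.

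What the paper actually does is short and self-contained: a single handle slide on the once-punctured torus replaces the basis $\{[a],[b]\}$ by $\{[c],[b]\}$ with $[c]=[a]+[b]$, in which $[\kt]=s[c]+((r-1)s+1)[b]$ has \emph{non-negative} coordinates $(p,q)$; any such curve is then visibly the closure of a positive braid on $p+q$ strands with $pq+p(p-1)+q(q-1)$ crossings. Stallings' theorem on positive (homogeneous) braid closures delivers both fiberedness and the fiber's Euler characteristic $(p+q)-(pq+p^2-p+q^2-q)$ in one stroke, giving $g=(p^2+q^2+pq+1)/2-(p+q)$, which evaluates to $(s^2(r^2-r+1)-s)/2$ at $(p,q)=(s,(r-1)s+1)$. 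Your instinct that $r^2-r+1$ reflects the trefoil is right, but it enters through the quadratic form $p^2+pq+q^2$ in this change of basis, not through the monodromy acting on homology. To repair your write-up, you would need to either reproduce this positive-braid argument or supply an independent proof of fiberedness together with a computation of $\deg\Delta_{\kt}$ --- neither of which your sketch currently contains.
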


\begin{proof}
This lemma is a special case of the theorem that was proved in
Appendix~B of~\cite{baker-diss} and appeared more recently in~\cite{baker-motegi};
we sketch the proof here and we refer the reader to these texts for a detailed discussion.
We view the left image of Figure~\ref{fig:FiberT} as a disk with two handles.
Sliding the left end of the handle for $a$ to the right around the handle for $b$ and back 
changes the crossing of the handles, and we obtain the surface shown in
Figure~\ref{fig:braidedbasis} with a new basis for $H_1(T)$ given by $[b],[c]$ where $[c]=[a]+[b]$.  
As shown in the right part of Figure~\ref{fig:braidedbasis},
for any non-negative coprime integers $p,q$, we get a representation of the curve whose
homology class is $p[c]+q[b]$ as a positive braid.
%
%{\tiny Figure~\ref{fig:braidedbasis} shows how to isotop the fiber $T$ of Figure~\ref{fig:FiberT} and its basis curves into a new position so that any 
%oriented simple closed curve $C$ in $T$ with $[C] = p[c]+q[b]$ (where $[c]=[a]+[b]$)
%for non-negative coprime integers $p,q$ is expressed as the closure of a positive braid.  }
This braid has $p+q$ strands and $pq + p(p-1) + q(q-1)$ crossings.   
Hence $C$ is a fibered knot with a fiber of euler characteristic $p+q - (pq + p^2-p  + q^2-q) = 2(p+q)-(p^2+q^2+pq) $ \cite[Theorem 2]{stallings}.  
Thus $C$ has genus $(p^2+q^2+pq+1)/2 - (p+q)$.
\begin{figure}[h!]
\psfrag{a}[l][l]{$a$}
\psfrag{b}[l][l]{$b$}
\psfrag{c}[l][l]{$c$}
\includegraphics[height=2in]{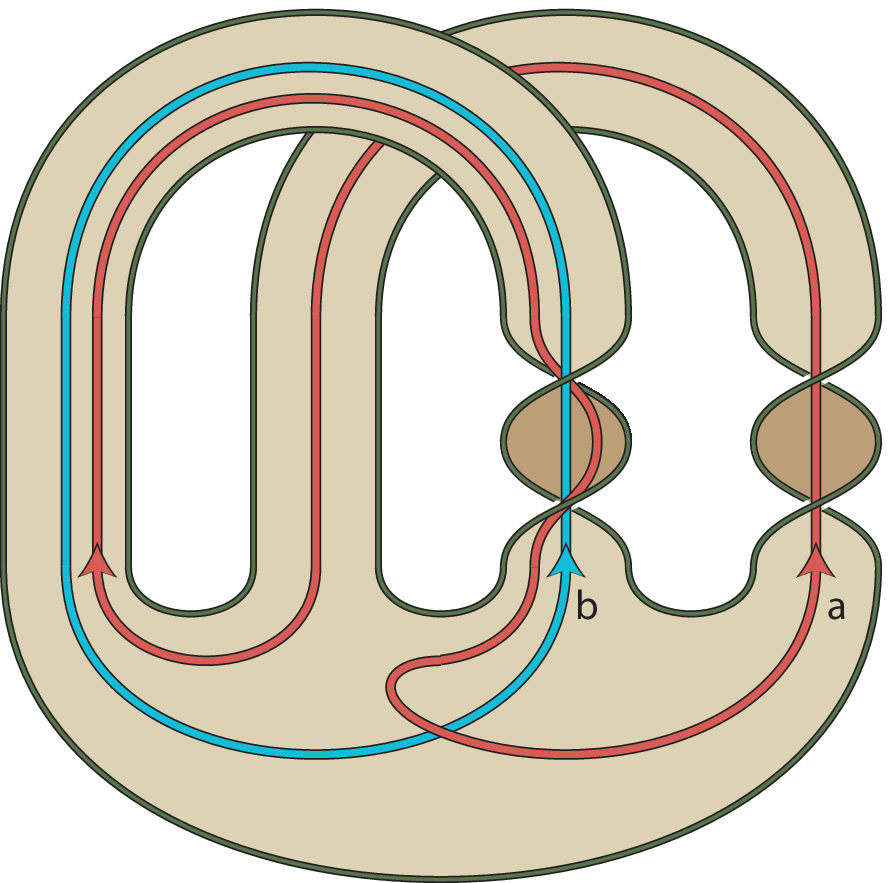} \qquad \qquad \includegraphics[height=2in]{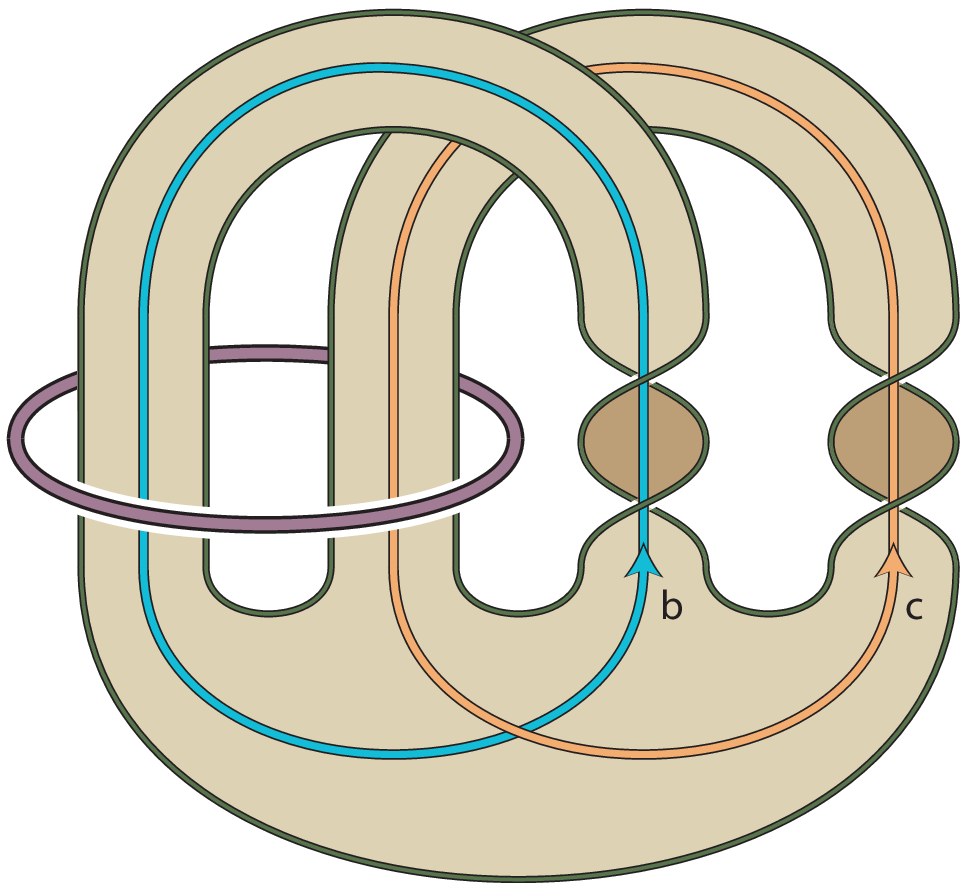}
\caption{Any curve expressed as a non-negative combination of the basis curves $c$ and $b$ is a positive braid with the braid axis shown.  }
%{Viewing the left image of Figure~\ref{fig:FiberT} as a disk with two handles, sliding the left end of the 
%handle for $a$ to the right, around the handle of $b$ and back changes the crossing of the handles.   Now 
%any curve expressed as a non-negative combination of the basis curves $c$ and $b$ is a positive braid with the braid axis shown.  
%Observe that $[c] = [a]+[b]$.}
\label{fig:braidedbasis}
\end{figure}

Turning to our case at hand, since $[\kt] = s[a] + (rs+1)[b] = s[c]+((r-1)s +1)[b]$ and $r,s>1$, we have that the genus of $\kt$ is $(s^2(r^2-r+1)-s)/2$.
\end{proof}

\begin{lem}
\label{lem:slopesL1L2}
The slope defined on $\ktm$ by $\widehat R$ is not the $0$-slope.
\end{lem}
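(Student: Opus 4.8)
The plan is to recognize the slope that $\widehat R$ defines on $\ktm$ as the surface framing of $\kt$ with respect to the fiber $T$ of $3_1$, and then to check that this framing is nonzero.

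First, I would unwind the annular-twist construction near $\ktm$. Restricted to a neighborhood of $\ktm = \kt\times\{-1\}$, the annulus $\widehat R = \kt\times[-1,1]$ is the product annulus $\kt\times[-1,-1+\delta]$, so the curve it cuts out on $\partial N(\ktm)$ is a copy of $\kt$ lying on a push-off of the fiber $T_-$. A direct inspection of a collar neighborhood of $\ktm$ (in which $T_-$ appears as $\ktm$ crossed with a diameter) shows that this is exactly the slope on $\partial N(\ktm)$ given by the surface framing of $\kt$ with respect to $T_-\cong T$, that is, the linking number $\ell$ of $\ktm$ with a push-off of itself off $T_-$. Since the $0$-slope of $\ktm$ is its Seifert longitude in $S^3$, the slope defined by $\widehat R$ equals the $0$-slope if and only if $\ell = 0$, and so it suffices to prove $\ell \ne 0$.

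Second, I would compute $\ell$. By definition of the Seifert form $V$ of the fibered knot $3_1$ on its fiber $T$ we have $\ell = V([\kt],[\kt])$. Feeding in the homology class $[\kt] = s[a]+(rs+1)[b]$ found in Section~\ref{sec:construction}, together with a Seifert matrix of the trefoil in the basis $\{[a],[b]\}$ --- for instance $\left(\begin{smallmatrix}-1&1\\0&-1\end{smallmatrix}\right)$ --- a short computation gives $\ell = -\bigl(s^2 - s(rs+1) + (rs+1)^2\bigr)$, the overall sign depending on conventions and being immaterial. The quadratic form $u^2 - uv + v^2$ is positive definite (its discriminant in $u$ is $-3$; equivalently, the symmetrization $V+V^{T}$ is definite because $3_1$ has nonzero signature), so it does not vanish at $(u,v) = (s, rs+1) \ne (0,0)$. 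Hence $\ell \ne 0$, and the slope defined by $\widehat R$ on $\ktm$ is not the $0$-slope.

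I expect the one step requiring care to be the bookkeeping in the first paragraph: matching the push-off annulus $\widehat R$ of Definition~1.1 of~\cite{BGL} with the collar picture of $\kt$ inside $T$ and pinning down precisely which longitude of $\ktm$ is cut out. Once the slope is identified as a surface framing, its nontriviality is immediate from the definiteness of the trefoil's Seifert form (equivalently, from the displayed computation). The same argument applies verbatim to $\ktp$ with $T_+$ in place of $T_-$.
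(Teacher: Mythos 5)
Your proof is correct, and its first step --- identifying the slope that $\widehat R$ cuts on $\partial N(\ktm)$ with the surface framing of $\ktm$ induced by the fiber $T_-$ --- is exactly the paper's opening move. Where you genuinely diverge is in showing that this framing is nonzero. You compute it outright: the framing is $V([\kt],[\kt])$ for a Seifert matrix $V$ of the trefoil, and since $V+V^{T}$ is definite (signature $\pm 2$ on a rank-two lattice), the value $-\bigl(s^{2}-s(rs+1)+(rs+1)^{2}\bigr)$ is nonzero for every nonzero class, in particular for $[\kt]=s[a]+(rs+1)[b]$. The paper instead argues by contradiction without ever evaluating the form: if the framing were $0$, then in a basis $\{\ktm,K'\}$ with $K'$ dual to $\ktm$ the Seifert matrix would be $\left(\begin{smallmatrix} 0 & j\\ j\pm1 & k\end{smallmatrix}\right)$, whose determinant $-j(j\pm1)$ is a product of consecutive integers and hence never $\pm1$, contradicting the fact that this determinant is (up to sign) the leading coefficient of the Alexander polynomial of the fibered knot $3_1$. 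Both arguments ultimately rest on the Seifert form of the trefoil. Yours buys an explicit value of the framing (useful if one ever needs the actual slope) at the cost of committing to a specific Seifert matrix and the identification $[\kt]=s[a]+(rs+1)[b]$; the paper's is basis-free, needs no knowledge of the class of $\kt$, and in fact shows that \emph{no} non-separating simple closed curve on the fiber of a genus-one fibered knot has zero surface framing. Either route is a complete proof.
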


\begin{proof}
By construction, $\ktm \subset T_-$ and the slope defined on $\ktm$
by $\widehat R$ is the same as the slope defined on $\ktm$ by $T_-$. 
Assume this slope on $\ktm$ is $0$.  Let $K'$ 
be a curve on $T_-$ intersecting $\ktm$ 
once.  The Seifert matrix for $3_{1}$ with respect to Seifert surface $T_-$
and the basis given by \ktm\ and $K'$ is 
$\displaystyle \left(\begin{smallmatrix} 0 & j \\ j\pm1 & k \end{smallmatrix}\right)$, 
where $k$ is the slope defined on $K'$ by $T_-$ and $j$ is the linking number of $K'$ and a 
push-off of $\ktm$.  This implies the leading coefficient of the Alexander polynomial of $3_1$ is $j(j\pm1)$ up to sign.  
But this can never be $\pm1$, a contradiction.  
\end{proof}

\begin{lem}
\label{lem:RnotToral}
The annulus $\widehat R$ does not embed in an unknotted torus in $S^3$.
\end{lem}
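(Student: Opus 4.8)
The plan is to argue by contradiction: suppose $\widehat R$ embeds in an unknotted torus $\Sigma \subset S^3$. Recall that $\widehat R$ is the annulus $\kt \times [-1,1]$ inside $E(3_1)$, with core a copy of $\kt = [r,-s]$, a curve lying on the fiber $T$ of the trefoil. The key invariant to exploit is the genus of the core curve of $\widehat R$. An unknotted torus $\Sigma$ bounds a solid torus on each side, so any annulus $\widehat R \subset \Sigma$ has core curve isotopic (in $S^3$) to a torus knot $T(p,q)$ for some coprime $p,q$ (possibly the unknot, when $\{p,q\}$ is $\{1,k\}$ or one coordinate is $0$). Hence the core of $\widehat R$, namely $\kt$, would have to be a torus knot. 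So it suffices to show that $\kt = [r,-s]$ is \emph{not} a torus knot for any $r \geq 3$, $s \geq 2$.

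To rule out torus knots I would use Lemma~\ref{lem:knotgenus}, which gives $g(\kt) = (s^2(r^2-r+1)-s)/2$, together with the fact — established inside the proof of Lemma~\ref{lem:knotgenus} — that $\kt$ is a fibered positive-braid knot, hence fibered with fiber the obvious braided Seifert surface. A torus knot $T(p,q)$ is fibered of genus $(p-1)(q-1)/2$, and its Alexander polynomial (equivalently, the monodromy) is very restrictive. The cleanest route: a nontrivial torus knot is fibered with \emph{periodic} monodromy, whereas the monodromy of $\kt$ (a composition of positive Dehn twists coming from the positive braid words in Figure~\ref{fig:braidedbasis}) is easily seen to be pseudo-Anosov or reducible but not periodic; alternatively, one checks directly that the genus formula $(s^2(r^2-r+1)-s)/2$ never factors as $(p-1)(q-1)/2$ for a pair $(p,q)$ compatible with the Alexander polynomial of $\kt$. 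A perhaps more robust argument: $\kt$ lies on the fiber surface $T$ of the trefoil, so $E(\kt)$ contains the essential once-punctured-torus-minus-$\kt$ piece coming from $T$; if $\kt$ were a torus knot its exterior would be Seifert fibered and small, and one derives a contradiction by locating an essential surface (or by noting the trefoil fiber forces a non-Seifert JSJ piece). Any one of these suffices; I would pick whichever needs the least new machinery.

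I expect the main obstacle to be making the step ``core of an annulus in an unknotted torus is a torus knot'' fully rigorous and then cleanly excluding all torus knots: the genus formula alone does not immediately preclude a coincidence $(s^2(r^2-r+1)-s)/2 = (p-1)(q-1)/2$, so one really does need the extra structure (fiberedness plus the form of the monodromy, or the Alexander polynomial, or an incompressible surface argument) to finish. A secondary technical point is the degenerate cases where the core of $\widehat R$ is isotopic to the unknot in $\Sigma$ (a meridian or longitude of one of the solid tori): but $\kt = [r,-s]$ with $r\geq 3, s\geq 2$ is visibly a nontrivial knot (its genus from Lemma~\ref{lem:knotgenus} is positive), so this case is immediately excluded. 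Once the reduction to ``torus knot'' is in hand and the exclusion is carried out, the contradiction is complete.
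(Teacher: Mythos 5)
Your reduction is fine as far as it goes: if $\widehat R$ sat in an unknotted torus $\Sigma$, its core $\kt$ would be a curve on $\Sigma$, hence a torus knot or unknot, and the unknot is excluded by Lemma~\ref{lem:knotgenus}. The gap is that you then need the statement ``$\kt=[r,-s]$ is not a nontrivial torus knot,'' and none of your three proposed routes actually delivers it. The monodromy route is based on a false implication: torus knots are themselves closures of positive braids, so their monodromy is a product of positive Dehn twists and yet is periodic; ``product of positive Dehn twists'' does not rule out periodicity. The genus route fails numerically, as you half-concede: already for $r=3$, $s=2$ the genus is $13=(3-1)(14-1)/2$, so a coincidence with $g(T(p,q))$ does occur, and the promised Alexander-polynomial comparison is never carried out. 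The essential-surface route is hand-waving: the twice-punctured torus $T\setminus \kt$ need not be essential in $E(\kt)$, and no concrete contradiction with Seifert-fiberedness is derived. So the proof is incomplete at its crucial step, and it is not even clear that the sub-statement you are aiming for is the right target.

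The paper avoids this issue entirely by exploiting the \emph{framing}, not just the knot type. If $\widehat R$ embedded in an unknotted torus, then the slope it defines on $\ktm$ would be the torus-surface slope $pq$ of a nontrivial torus knot $T(p,q)$, and by Moser surgery along that slope yields a connected sum of two lens spaces, neither of which is $S^3$. On the other hand, that same slope is the fiber-surface slope of $T_-$, and $\ktm$ framed by the fiber of the trefoil is doubly primitive (Berge), so surgery along it yields a single lens space. Uniqueness of prime decomposition gives the contradiction. This argument needs no knowledge of whether $\kt$ is abstractly a torus knot, which is exactly the question your approach gets stuck on; if you want to salvage your outline you must either prove ``not a torus knot'' honestly (say via Alexander polynomials of these fibered knots) or, better, bring the annulus slope into the argument as the paper does.
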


\begin{proof}
First,  since $r \ge 3$ and $s \ge 2$, the knot $\kt$ is not an unknot by Lemma~\ref{lem:knotgenus}.  
Hence $\ktm$ is also not an unknot.
Now assume, for a contradiction, that $\widehat R$ does embed in the unknotted torus in $S^3$.
Then, since $\ktm$ is not an unknot, performing surgery on $\ktm$ along the slope defined by $\widehat R$ would yield a
connected sum of two lens spaces, neither of which is $S^3$.
On the other hand, as a framed knot in the fiber of the trefoil, $\ktm$ is a doubly primitive knot 
(this was first proved by Berge in an unpublished manuscript \cite{berge};
see, for example, the Appendix of \cite{saito}).
Thus surgery on \ktm\ with that slope produces a lens space. 
This contradicts uniqueness of prime decomposition of 3-manifolds~\cite{milnor}.
\end{proof}

It is well known that every knot on $3_1$ admits a genus 2 Heegaard surface;
for completeness we prove this here:

\begin{lem}
\label{lem:GenusTwo}
For any $r,s,n$ as above, $g(E(\krsn)) \leq 2$.
\end{lem}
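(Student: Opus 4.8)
The plan is to drop the twisting parameters and prove the general fact that any knot $K$ that is a simple closed curve on the once-punctured torus $T$ satisfies $g(E(K))\le 2$; since $\krsn$ is exactly such a curve (its homology class $(sn+1)[a]+(rsn+r+n)[b]$ is primitive), this yields the lemma for all $r,s,n$ at once. Throughout let $F$ denote the genus-one Seifert surface of $3_1$ in $S^3$ with $F\cap E(3_1)=T$ and $\partial F=3_1$, so that $K\subset T\subset F$. I will build an explicit genus-two Heegaard splitting of $E(K)$ directly out of $F$.

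The argument rests on two facts about $F$. First, $N(F)\cong F\times[-1,1]$ is a genus-two handlebody, and $K=K\times\{0\}$ is a core curve of it: $K$ is a non-separating simple closed curve on the once-punctured torus $F$ (its class $m[a]+n[b]$ has $\gcd(m,n)=1$), so there is a simple closed curve $K'$ on $F$ with $|K\cap K'|=1$, the wedge $K\cup K'$ is a spine of $F$ and hence of $N(F)$, and $K$ is one of its two loops. Consequently, deleting a small tubular neighborhood $N(K)\subset N(F)$ leaves a genus-two compression body $C:=N(F)\setminus N(K)$ with $\partial_-C=\partial N(K)$ and $\partial_+C=\partial N(F)=:\Sigma$, a closed genus-two surface. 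Second, $H:=S^3\setminus N(F)$ is a genus-two handlebody; this is the assertion that the fiber surface of the trefoil is a free Seifert surface. I would justify it by recalling that $F$ is the plumbing (Murasugi sum) of two Hopf bands, so $S^3\setminus N(F)$ is the boundary connected sum of the two solid tori that are the Hopf-band exteriors; alternatively, cutting $E(3_1)$ along its fiber yields the genus-two handlebody $T\times I$, and restoring a neighborhood of $3_1$ glues on a solid torus along an annulus in a way that leaves the fundamental group free of rank two, forcing a handlebody.

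Granting these, since $N(K)\subset N(F)$ we have $E(K)=S^3\setminus N(K)=(N(F)\setminus N(K))\cup_{\Sigma}(S^3\setminus N(F))=C\cup_{\Sigma}H$, a genus-two compression body glued along the genus-two surface $\Sigma$ to a genus-two handlebody, with $\partial E(K)=\partial N(K)$ lying on the compression-body side. This is a genus-two Heegaard splitting of $E(K)$, so $g(E(K))\le 2$. Equivalently, one may phrase the same construction as: the co-core arc of the pair of pants $T$ cut along $K$ is an unknotting tunnel for $K$.

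The two facts above are the only non-formal ingredients. The first requires knowing that a simple closed curve on the once-punctured torus extends to a geometric free basis of its fundamental group, so that $K$ is genuinely a core curve of $N(F)$ and not merely homologically primitive. The second — that the exterior of $F$ in $S^3$ is an honest handlebody rather than just a manifold with free fundamental group — is the step I expect to require the most care to present cleanly, though it is classical for the fiber surface of the trefoil.
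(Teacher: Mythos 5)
Your proof is correct and takes essentially the same route as the paper: both decompose $E(\krsn)$ along the genus-two surface bounding the product neighborhood $F\times[-1,1]$ of the fiber Seifert surface, use that the complementary side is a genus-two handlebody (coming from the genus-two Heegaard splitting of $S^3$ induced by two fibers), and show that deleting $N(K)$ from the product side leaves a compression body. The only cosmetic difference is that the paper certifies the compression-body claim with an explicit vertical annulus $K\times[-1,0]$ and a vertical disk over an arc dual to $K$, where you instead note that $K$ is a core curve of the handlebody.
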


\begin{proof}
Let $T'$, $T_-'$, and $T_+'$ be Seifert surfaces for $3_1$ whose intersection with $E(3_1)$ is 
$T$, $T_-$, and $T_+$ respectively.  Since $E(3_1)$ fibers over the circle, it is easy to see that
$T_-' \cup T_+'$ provides a genus $2$ Heegaard splitting for $S^3$.
Let $H$ be the handlebody containing $T'$; it is natural to identify $H$ with
$T' \times [-1,1]$.  Then $\krsn \subset \mbox{int}T'$ cobounds the annulus  
$A = \krsn \times [-1,0]$ with $\partial H$.  Since $T'$ is a once punctures torus, there exists an arc
$\alpha \subset T'$ intersecting $\krsn$ transversaly once; then $D = \alpha \times [-1,1]$ is
a compressing disk for $\partial H$ intersecting $\krsn$ transversaly once.  Using $A$ and $D$,
it is easy to see that $H \setminus N(\krsn)$ is a compression body.  The lemma follows.
\end{proof}

\section{Smallness}
\label{sec:small}

Fix $r \geq 3$ and $s \geq 2$.  %Let \kn\ be the knot \krsn.
Our goal in this section is to show that for sufficiently large $n$, \krsn\ is small.  This is a direct
application of Baker's main result in~\cite{baker} (see also \cite{baker-diss}) which we now explain.
Baker analyzes essential closed surfaces in the exterior of the knot
$[b_1,\dots,b_n]$ when $|b_1| \geq 3$ and for $i \geq 2$, 
$|b_i| \geq 2$.  He shows that essential closed surfaces in the exterior of 
$[b_1,\dots,b_n]$ are in one to one correspondence with pairs of sets
$I, J \subset \{1,\dots,n\}$ fulfilling the following three requirements:
\begin{enumerate}
\item $1 \not\in I \cap J$.
\item  Neither $I$ nor $J$ admits consecutive indices.
\item  If $1 \in I$, then $\sum_{i \in J} b_i - \sum_{i \in I} b_i = 0$.
\item  If $1 \not\in I$, then $\sum_{i \in J} b_i - ((\sum_{i \in I} b_i) + 1) = 0$.
\end{enumerate}
We will show that \krsn\ is small by showing that there are no such sets $I,J$.

\begin{pro}
\label{pro:small}
For any $n \geq 2$ and $n \neq r, r-1$, \krsn\ is small.
\end{pro}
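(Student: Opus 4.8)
The plan is to apply Baker's classification directly to the knot $\krsn = [r,-s,n]$, i.e.\ to the continued fraction $[b_1,b_2,b_3] = [r,-s,n]$ with $b_1 = r \geq 3$, $b_2 = -s$ (so $|b_2| = s \geq 2$), and $b_3 = n$ (so $|b_3| = |n| \geq 2$, which is why we assume $n \geq 2$). By Baker's theorem, an essential closed surface in $E(\krsn)$ corresponds to a pair of subsets $I, J \subseteq \{1,2,3\}$ satisfying conditions (1)--(4) in the list above. So the entire content of the proposition is the combinatorial assertion that, for $n \geq 2$ with $n \neq r$ and $n \neq r-1$, no such pair $(I,J)$ exists; smallness of $\krsn$ then follows immediately.

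The key step is therefore a finite case analysis over the subsets of $\{1,2,3\}$. Condition (2) says neither $I$ nor $J$ may contain two consecutive indices, so each of $I$ and $J$ is one of $\emptyset, \{1\}, \{2\}, \{3\}, \{1,3\}$. Condition (1) says $1 \notin I \cap J$. I would split into the two cases dictated by conditions (3) and (4). \emph{Case $1 \in I$} (hence $1 \notin J$): the relevant equation is $\sum_{i \in J} b_i = \sum_{i \in I} b_i$. Here $I \in \{\{1\},\{1,3\}\}$ and $J \in \{\emptyset,\{2\},\{3\}\}$, giving a handful of equations such as $0 = r$, $-s = r$, $n = r$, $0 = r+n$, $-s = r+n$, $n = r+n$; using $r \geq 3$, $s \geq 2$, $n \geq 2$, each of these is impossible \emph{except} $n = r$, which is exactly the excluded value (this explains the hypothesis $n \neq r$). \emph{Case $1 \notin I$}: the equation is $\sum_{i \in J} b_i = \bigl(\sum_{i \in I} b_i\bigr) + 1$. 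Now $I \in \{\emptyset,\{2\},\{3\}\}$ and $J \in \{\emptyset,\{1\},\{2\},\{3\},\{1,3\}\}$, producing equations like $0 = 1$, $r = 1$, $-s = 1$, $n = 1$, $r+n = 1$, $0 = -s+1$, $n = -s+1$, $r + n = -s + 1$, etc.; again using the size constraints on $r,s,n$, the only survivor is $n = -s+1$... let me re-examine: I expect the one surviving relation in this case to force $n = r-1$ (coming from $I = \{3\}$, $J = \{1\}$: $r = n+1$), which is the second excluded value (explaining $n \neq r-1$). One cleanly enumerates all pairs, checks each equation against $r \geq 3$, $s \geq 2$, $n \geq 2$, and confirms that every solution requires either $n = r$ or $n = r-1$.

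The only real subtlety—the "main obstacle," though it is minor—is being careful and exhaustive with the bookkeeping: there are at most $5 \times 5 = 25$ pairs $(I,J)$ to consider, further cut down by condition (1) and by the split on whether $1 \in I$, and one must not overlook a pair or misread which of conditions (3)/(4) applies. I would organize the argument as a short table or an itemized list of the surviving pairs and their associated linear equations, then observe that each equation, under the standing hypotheses $r \geq 3$, $s \geq 2$, $n \geq 2$, is either immediately absurd (e.g.\ $0 = 1$, or a sum of positive integers equal to a nonpositive one) or forces $n \in \{r-1, r\}$. Since we have excluded $n = r$ and $n = r-1$, no admissible pair $(I,J)$ exists, so by Baker's theorem $E(\krsn)$ contains no essential closed surface, i.e.\ $\krsn$ is small.
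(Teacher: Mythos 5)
Your proposal is correct and follows essentially the same route as the paper: both reduce the proposition to Baker's classification and then exhaustively check the finitely many pairs $(I,J)$ of non-consecutive subsets of $\{1,2,3\}$, finding that the only equations not ruled out by $r\geq 3$, $s\geq 2$, $n\geq 2$ are $n=r$ and $n=r-1$, which are excluded by hypothesis. The paper merely organizes the same bookkeeping as a table rather than a case split on whether $1\in I$.
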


\begin{proof}
Recall that \krsn\ is $[r,-s,n]$ and that $r \geq 3$, $s \geq 2$.

By Condition~2 above the possibilities for $I, J \subset \{1,2,3\}$ are $\emptyset,\{1\},\{2\},\{3\},$ and $\{1,3\}$.
The table below describes the sum that appears in the final two equations.  The entries of the table are as follows:
\begin{enumerate}
\item The first row is the set $I$. 
\item The second row records either $- \sum_{i \in I} b_i$ when $1 \in I$
or $- ((\sum_{i \in I} b_i) + 1)$ when $1 \not\in I$.
\item The first column is the set $J$. 
\item The second column records $\sum_{i \in J} b_i$.
\item The remaining entries correspond to $I$, $J$ of the appropriate row and column.
The entry is the sum of the entry in the second row and that in the second column.
Note that these entries are exactly the sum given in Conditions~(3) and ~(4) above,
and must therefore be zero if a closed essential surface is to exist.  The exception are
the four entries that correspond to $I,J$ both being $\{1\}$ or $\{1,3\}$, which are ruled out by
Condition~(1); we entered \X\ there.
\end{enumerate}

\begin{center}
{\renewcommand{\arraystretch}{1.3}
\begin{tabular}{@{}cc|c@{\quad}c@{\quad}c@{\quad}c@{\quad}c@{}}
\toprule
&   &$\emptyset$ & $\{1\}$ & $\{2\}$ & $\{3\}$ & $\{1,3\}$ \\ 
& &-1 & $-r$ & $s-1$ & $-n-1$ &  $-(r+n)$ \\  
\hline
$\emptyset$ &$0$ & -1 & $-r$ & $s-1$ & $-n-1$ &  $-(r+n)$ \\ 
$\{1\}$ & $+r$ & $r-1$ & \X & $r+s-1$ & \textcolor{red}{$r-n-1$} & \X      \\
$\{2\}$ & $-s$ & $-(s+1)$ & $-(r+s)$ & $-1$ & $-(s + n+1)$ & $-(r+s+n)$ \\ 
$\{3\}$ & $+n$ & $n-1$ & \textcolor{red}{$n-r$} & $s+n-1$ & $-1$ & $-r$  \\ 
$\{1,3\}$ & $r+n$ & $r+n-1$ & \X & $r+s+n-1$ & $r-1$ & \X    \\
\bottomrule
\end{tabular}
}
\end{center}

Since  $r\geq 3$, $s \geq 2$, and $n \geq 2$, 
the only entries that could be  
zero are $r-n-1$ and $n-r$.  Our assumption shows that
these are not zero as well.

This completes the proof of Proposition~\ref{pro:small}.
\end{proof}

\section{Bridge index}
\label{sec:bridge}
Fix $r \geq 3$ and $s \geq 2$.  Let \kn\ be the knot \krsn.

Recall that we use the notation $b_1(\kn)$ for the torus bridge index of $\kn$.
In this section we apply the work of Baker--Gordon--Luecke~\cite{BGL} to prove:

\begin{pro}
\label{pro:bridge}
For \kn\ we have:
\[\lim_{n \to \infty} b_1(\kn) = \infty\]
\end{pro}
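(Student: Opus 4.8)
The plan is to extract a lower bound on the torus bridge index $b_1(\kn)$ directly from Baker--Gordon--Luecke~\cite{BGL}, using the description of \kn\ from Notation~\ref{notation:HL1L2} as the knot obtained from \k0\ by $n$ twists along the annulus $\widehat R$. Recall that $\widehat R$ is the annulus $\kt \times [-1,1]$ cobounded by $\ktm$ and $\ktp$ sitting on the fibers $T_-$ and $T_+$. The three lemmas we have already proved are exactly the hypotheses that \cite{BGL} requires of this twisting configuration: Lemma~\ref{lem:slopesL1L2} says the slope $\widehat R$ defines on $\ktm$ (and symmetrically on $\ktp$) is not the $0$-slope, and Lemma~\ref{lem:RnotToral} says $\widehat R$ does not lie on an unknotted torus. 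These are the non-degeneracy conditions under which \cite{BGL} concludes that the bridge number (and in particular the torus bridge number) of the twisted knots grows without bound. So the first step is to cite the relevant theorem of \cite{BGL}, verify term-by-term that our $\ktm, \ktp, \widehat R$ satisfy its hypotheses using Lemmas~\ref{lem:knotgenus}, \ref{lem:slopesL1L2}, and \ref{lem:RnotToral}, and read off the conclusion $b_1(\kn) \to \infty$.

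More concretely, I expect \cite{BGL} to give a bound of roughly the shape
\[
b_1(\kn) \;\geq\; c\,|n| - c'
\]
for constants $c, c' > 0$ depending only on the fixed twisting data $\kt$, $r$, $s$ (equivalently on the genus of $\kt$ computed in Lemma~\ref{lem:knotgenus} and on the intersection data of $\widehat R$ with the fibers). Even without pinning down the exact constants, any bound that is linear (or even just unbounded) in $|n|$ suffices, since the Proposition only asserts $\lim_{n\to\infty} b_1(\kn) = \infty$. Thus the second step is simply to let $n \to \infty$ in that inequality.

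The one point requiring care — and the main obstacle — is matching conventions and hypotheses precisely with \cite{BGL}: their results are stated for twisting along an annulus whose core is a knot satisfying certain primitivity/doubly-primitive and non-torus conditions, and one must make sure that ``$\kt$ as a curve on a fiber of the trefoil'' is placed in $S^3$ in the way their framework demands (this is why Notation~\ref{notation:HL1L2} was set up ``to be consistent with the notation used in~\cite{BGL}''). In particular one should check that the push-off framings giving the $(\tfrac1k,-\tfrac1k)$-surgery description coincide with the framings \cite{BGL} uses, that the slope condition in Lemma~\ref{lem:slopesL1L2} is the one their hypothesis names, and that the ``not on an unknotted torus'' condition of Lemma~\ref{lem:RnotToral} is exactly what rules out the degenerate case in which bridge number stays bounded. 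Once the dictionary is in place the argument is a direct invocation of their theorem, so the bulk of the write-up will be this verification rather than any new computation.
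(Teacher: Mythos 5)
Your overall strategy is the same as the paper's: invoke Baker--Gordon--Luecke~\cite{BGL} for the annular twisting description of \kn\ from Notation~\ref{notation:HL1L2}, feed in Lemmas~\ref{lem:slopesL1L2} and~\ref{lem:RnotToral} as the non-degeneracy hypotheses, and conclude that the torus bridge index is unbounded. You also correctly identify that the nonzero-slope condition of Lemma~\ref{lem:slopesL1L2} is what makes the configuration usable (in the language of~\cite{BGL}, the nonzero linking number of $\ktm$ and $\ktp$ makes the link $\ktm,\ktp,\k0$ \emph{caught}), and that Lemma~\ref{lem:RnotToral} kills the ``annulus lies on an unknotted torus'' alternative.

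However, there is a genuine gap: Corollary~1.3 of~\cite{BGL} has \emph{two} degenerate conclusions, not one, and your proposal only accounts for the first. Besides (1) $\widehat R$ isotoping into an unknotted torus, the corollary allows (2) the existence of an essential annulus $A$ properly embedded in the complement of $\ktm \cup \ktp \cup \k0$ with one boundary component on each of $\partial N(\ktm)$ and $\partial N(\ktp)$, matching the slopes of $\partial \widehat R$. None of Lemmas~\ref{lem:knotgenus}, \ref{lem:slopesL1L2}, \ref{lem:RnotToral} rules this out, and your plan of ``verify the hypotheses term-by-term and read off the conclusion'' would stall here. The paper disposes of case (2) with a separate homological argument: such an $A$ glued to $\widehat R$ along their common boundary slopes would give an immersed torus in $S^3$ meeting $\k0$ transversally in exactly one point, which is impossible since $S^3$ is a homology sphere. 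Without some argument of this kind your proof is incomplete. (A minor further point: Lemma~\ref{lem:knotgenus} is not itself a hypothesis to be checked against~\cite{BGL}; its only role is to show $\kt$ is knotted, which is already consumed inside the proof of Lemma~\ref{lem:RnotToral}.)
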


\begin{proof}
Recall Notation~\ref{notation:HL1L2}.

In~\cite{BGL} the property {\it caught} is defined for a three component link $L_1,L_2,K$
(we remark that the roles played by $L_{1}$ and $L_{2}$ in~\cite{BGL} are different to the role played by $K$).  
By Lemma~2.5 of that paper (numbers refers to version~1 of the arxiv, see bibliography),
if the linking number of $L_1$ and $L_2$ is not zero, then the link $L_1,L_2,K$ is caught.
Applying this in our situation, it is easy to see that the linking number of $\ktm$
and $\ktp$ is zero if and only if the slope defined by $\widehat R$ on $\ktm$ is the zero slope;
by Lemma~\ref{lem:slopesL1L2} this is not the case.
Hence the link $\ktm,\ktp,\k0$ is caught and we may apply Corollary~1.3 of~\cite{BGL}
to conclude that one of the following holds:
	\begin{enumerate}
	\item $\widehat R$ can be isotoped into an unknotted torus in $S^3$; or
	\item  There is an essential annulus $A$ properly embedded in $S^3 \setminus N(L_1 \cup L_2 \cup \k0)$
	with a boundary component in each of $\partial N(L_1)$ and $\partial N(L_2)$ 
	such that $\partial A$ and $\partial \widehat R$ have the same slope on $\partial N(L_1)$ and $\partial N(L_2)$ ; or	
	\item $\lim_{n \to \infty} b_1(\kn) = \infty$.
	\end{enumerate}
We now show that Conclusions~(1) and~(2) do not happen: by Lemma~\ref{lem:RnotToral}, (1) does not happen.
If~(2) happened and $A$ as above existed, then $A \cup \widehat R$ would form a torus immersed in $S^3$, intersecting \k0\
transversally exactly once.  This is impossible as $S^3$ is a homology sphere.

Thus Conclusion~(3) holds, completing the proof of Proposition~\ref{pro:bridge}.
\end{proof}

\section{Hypebolicity}
\label{sec:hyperbolicity}

It remains to show that for sufficiently large $r,s$ and $n$, \krsn\ is
hyperbolic.  Let $\mathcal{C}_7$ be the minimally twisted seven
component link, see the right side of Figure~\ref{fig:surgerydiagram}.  
We first prove that $E(K^n_{r,s})$ may be obtained by Dehn filling $E(\mathcal{C}_7)$.  The generalization of 
this for knots in $T$ is the focus of \cite{BakerBerge}. We overview it here in our specific case.

\begin{pro}
\label{pro:MT7L}
$E(\krsn)$ is homeomorphic to the result of filling $6$ components of 
$\partial E(\mathcal{C}_7)$ along the slopes $r, -s, n, -n-1, s, -r$ 
as shown on the right of Figure~\ref{fig:surgerydiagram}.
\end{pro}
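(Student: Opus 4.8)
The plan is to exhibit an explicit link $\mathcal{L}$ in $S^3$ whose exterior is homeomorphic to $E(\mathcal{C}_7)$, and for which $\krsn$ arises by a transparent sequence of surgeries. Recall that $E(3_1)$ fibers over the circle with once-punctured-torus fiber, and that $\krsn$ is obtained from $K^0$ by $n$ annular twists along $\widehat R = \kt \times [-1,1]$, where the twist is realized (following Definition~1.1 of~\cite{BGL}) by $(\tfrac1n,-\tfrac1n)$-surgery on the push-off link $\ktm \cup \ktp$. By the same realization applied to the curves $c$ appearing in the braid picture of Lemma~\ref{lem:knotgenus}, the full monodromy of the trefoil fibration, restricted to the relevant region, can be built as a product of Dehn twists along the curves $a$, $b$ (equivalently $c$, $b$) on $T$; each such twist is realized by annular surgery, contributing two parallel curves to a surgery diagram. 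Assembling these, together with $K = K^0$ itself and the curve dual to $K$ used to compress, produces a link $\mathcal{L}_7$ in $S^3$ — drawn on the left of Figure~\ref{fig:surgerydiagram} and compared with the fiber picture of Figure~\ref{fig:FiberT} — with the property that suitable integral/rational fillings of its components recover $E(\krsn)$ with the boundary slopes $r, -s, n, -n-1, s, -r$. The numbers $r$ and $-s$ encode the word $[r,-s]=\kt$; the slope $n$ is the annular-twisting parameter on $\ktp$; and the slope $-n-1$ on the dual component comes from the $(\tfrac1n,-\tfrac1n)$ convention combined with the framing shift $[K^n]=n[\kt]+[K^0]$; the slopes $s$ and $-r$ on the remaining two components mirror $[r,-s]$ on the $T_-$ side, reflecting the symmetry $T_- \leftrightarrow T_+$ of the construction.

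Concretely I would proceed as follows. First, draw $E(3_1)$ as a fibered manifold and isotope the fiber $T$ into the position of Figure~\ref{fig:FiberT} (right), so that the curves $a$, $b$ (and $c=a+b$) sit in standard position. Second, replace the trefoil and its monodromy by a surgery description: the trefoil exterior is obtained by appropriate surgery on a chain/braid-type link, and each Dehn twist in the monodromy becomes an annular twist, i.e. a $(\pm\tfrac1k)$-pair of surgery curves; doing this consistently for the twists needed on both sides of $T$ (producing $T_-$ and $T_+$) yields a link. Third, overlay the knot $K^0 = [a]+r[b]$ on $T$ and the dual arc/curve $\alpha$ (as in the proof of Lemma~\ref{lem:GenusTwo}) used to realize the twist along $\widehat R$; record the resulting $7$-component link. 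Fourth, identify this link with $\mathcal{C}_7$ — the minimally twisted seven-chain link — by an explicit isotopy; this is the step where I expect to lean most heavily on the companion paper~\cite{BakerBerge}, which carries out exactly this identification in the general setting. Fifth, bookkeep the filling slopes: on the component corresponding to $\kt$ the word $[r,-s]$ forces the pair $(r,-s)$; on the symmetric pair it forces $(s,-r)$; the two push-off components of $\widehat R$ get $n$ and $-n-1$ once the framings are matched to the $(\tfrac1n,-\tfrac1n)$ convention together with the homology relation $[K^n]=n[K^{\mathrm{tw}}]+[K^0]$. Verifying that these slopes are the correct ones — i.e. that Dehn filling $E(\mathcal{C}_7)$ along $(r,-s,n,-n-1,s,-r)$ really gives $E(\krsn)$ rather than some other knot exterior — is the crux.

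The main obstacle is precisely this slope bookkeeping: one must fix orientations, framings (push-off framings versus Seifert framings versus the $0$-framing coming from $T$), and the continued-fraction convention $[b_1,\dots,b_n]=1/(b_1-1/(b_2-\cdots))$, and then chase them through the identification with $\mathcal{C}_7$ without sign errors. A clean way to organize this is to check it on homology first — compute, for the filled manifold, that the core of the filling solid torus corresponding to $K$ is a knot with the right Alexander polynomial / genus (using Lemma~\ref{lem:knotgenus}) — and then upgrade to a homeomorphism of exteriors by tracking the explicit isotopies. Everything else (existence of the link $\mathcal{C}_7$, its being the minimally twisted seven-chain link, the realization of annular twists by surgery) is either standard or already recorded in~\cite{BGL} and~\cite{BakerBerge}, so I would cite those and present the diagrammatic identification in Figure~\ref{fig:surgerydiagram} as the substance of the proof.
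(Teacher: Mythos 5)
Your overall strategy coincides with the paper's: produce a seven--component link $\mathcal{L}_7$ in $S^3$ whose six auxiliary components realize, by annular twists, the Dehn twists carrying $a$ to $\krsn$, then transfer this surgery description to $\mathcal{C}_7$ using \cite{BakerBerge} and bookkeep the slopes. However, one step of your plan, as written, would fail. You propose to ``identify this link with $\mathcal{C}_7$ by an explicit isotopy.'' No such isotopy can do the job: filling $E(\mathcal{L}_7)$ along the six slopes $\pm 1/r,\pm 1/s,\pm 1/n$ returns $S^3$ containing $\krsn$, whereas filling $E(\mathcal{C}_7)$ along the integer slopes $r,-s,n,-n-1,s,-r$ yields a \emph{lens space} containing the knot $K^*$ of Figure~\ref{fig:surgerydiagram}. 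What \cite[Theorem~5.1]{BakerBerge} supplies is a homeomorphism of the \emph{exteriors} $E(\mathcal{L}_7)\cong E(\mathcal{C}_7)$, obtained by exhibiting both as double branched covers of the same tangle; this homeomorphism does not respect meridians (it carries the meridian and $T$-framing of $K^n_{r,s}$ to the longitude and meridian of $K^*$), and it is exactly this nontrivial slope correspondence that converts the $\pm 1/k$ coefficients into the stated integers. If you insist on an isotopy you will never get the integer slopes of the proposition.

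Two smaller inaccuracies in your construction of the link: the seven components are $L_0=a$ together with three nested pairs of push-offs of $b$, $a$, $b$, realizing $K^n_{r,s}=\tau_b^{-r}\circ\tau_a^{s}\circ\tau_b^{-n}(a)$ as read off from the odd-length continued fraction $[r,-s,n]$ (this is \cite[Section~3.1]{BakerBerge}); there is no component playing the role of ``the curve dual to $K$ used to compress,'' and what the annular twists realize is this word in $\tau_a,\tau_b$ applied to $a$, not the monodromy of the trefoil fibration. Your instinct that the slope bookkeeping is the crux, and that it should be delegated to the companion paper, is correct --- the paper's proof is essentially a citation of \cite{BakerBerge} for both the surgery description and the slope correspondence --- but the mechanism of the identification must be the branched-cover homeomorphism of exteriors, not an isotopy.
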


\begin{proof}
The knot $K^n_{r,s} = [r,-s,n]$ may be obtained by surgery on the 
link 
\[\mathcal{L}_7 = L_{-3} \cup L_{-2} \cup L_{-1} \cup L_{0} \cup L_{1} \cup L_2 \cup L_3\]
as depicted on the left side of Figure~\ref{fig:surgerydiagram} (compare with the right side of Figure~\ref{fig:FiberT}); 
this is the content of \cite[Section~3.1]{BakerBerge}.  In short, the idea is that a continued fraction 
of odd length confers an expression of the corresponding knot in $T$ as the image of $a$ under a 
sequence of Dehn twists.  In our present situation, we have
\[K^n_{r,s} = \tau_b^{-r} \circ \tau_a^s \circ \tau_b^{-n} ( a )\]
where $\tau_a$ and $\tau_b$ are positive (right-handed) Dehn twists in $T$ along the curves $a$ and $b$ respectively.  Using the Dehn surgery realization of Dehn twists, 
we obtain a surgery description of $K^n_{r,s}$ by  nesting the pairs of push-offs $L_{-3} \cup L_3$ of $b$, $L_{-2} \cup L_{2}$ of $a$, $L_{-1} \cup L_1$ of $b$, together with $L_0 = a$.  
For these surgeries we use the framings induced by the push-offs.

Next, \cite[Theorem~5.1]{BakerBerge} shows that $E(\mathcal{L}_7)$ and $E(\mathcal{C}_7)$ are homeomorphic
by viewing them as double branched 
covers of the same tangle. A byproduct of the proof is a description of how slopes of $\partial E(\mathcal{L}_7)$ 
correspond to slopes of 
$\partial E(\mathcal{C}_7)$.  This correspondence confers the surgery description on $\mathcal{L}_7$ of the knot $K^n_{r,s}$ to the surgery description on $\mathcal{C}_7$ of the knot $K^*$ 
given on the right side of Figure~\ref{fig:surgerydiagram}.  (The meridian and framing by $T$ for $K^n_{r,s}$ correspond to the longitude and meridian of the knot $K^*$ as a component 
of $\mathcal{C}_7$.)    In particular, $E(K^*)$ is the filling of $E(\mathcal{C}_7)$ stated, and it is homeomorphic to $E(K^n_{r,s})$.
\end{proof}

\begin{figure}[h!]
\centering
\psfrag{K}[r][r]{$K^n$}
\psfrag{A}[r][r]{$-1/s$}
\psfrag{B}[r][r]{$1/s$}
\psfrag{C}[r][r]{$1/n$}
\psfrag{D}[r][r]{$1/r$}
\psfrag{E}[r][r]{$-1/r$}
\psfrag{F}[r][r]{$-1/n$}
\psfrag{G}{{\Large $\mathcal{L}_7$}}
\psfrag{k}[r][r]{$K^*$}
\psfrag{a}[r][r]{$r$}
\psfrag{b}[r][r]{$-s$}
\psfrag{c}[l][l]{$n$}
\psfrag{d}[l][l]{$-n+1$}
\psfrag{e}[r][r]{$s$}
\psfrag{f}[r][r]{$-r$}
\psfrag{g}{{\Large$\mathcal{C}_7$}}
\includegraphics[height=2.25in]{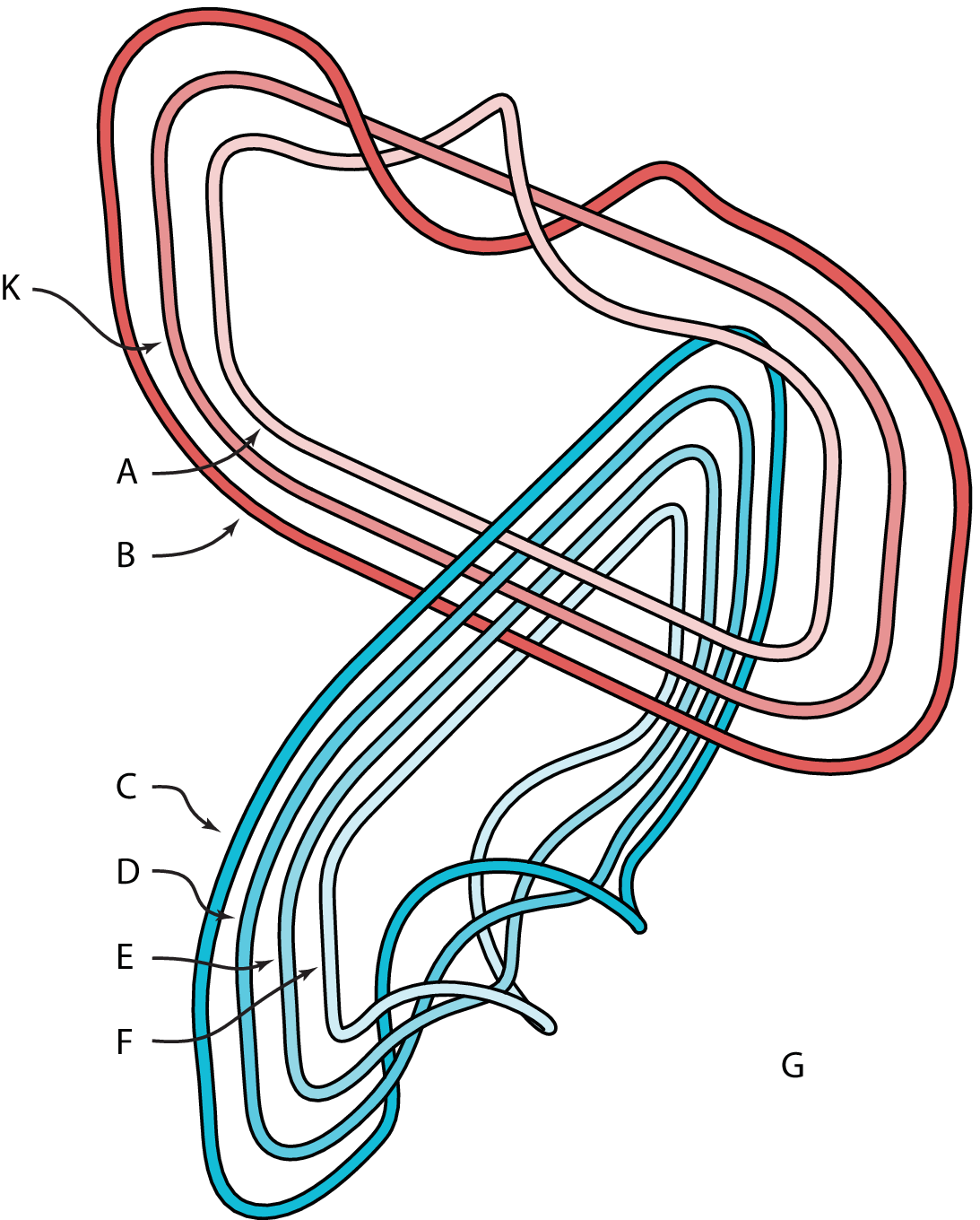} \qquad\qquad \includegraphics[height=2.25in]{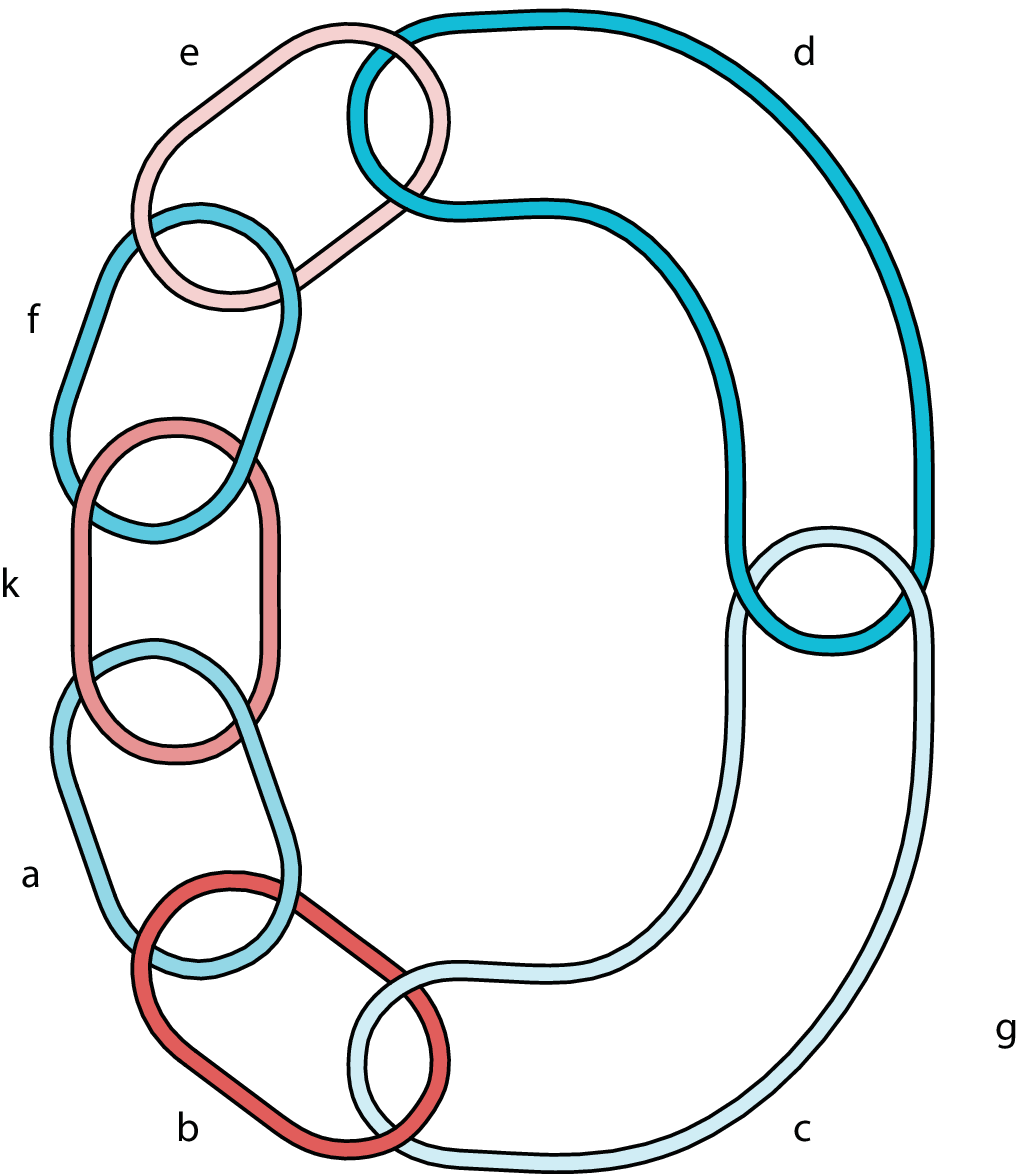}  
\caption{Left: a surgery description of $K^n$ on the link $\mathcal{L}_7$; surgery slopes are taken with respect to framing by fiber. Right: a surgery diagram on $\mathcal{C}_7$ for $K^*$, a knot in a lens space with $E(K^*) \cong E(K^n)$.}
\label{fig:surgerydiagram}
\end{figure}

\begin{pro}
\label{pro:hyperbolicity}
For $r$, $s$, and $n$ sufficiently large, \krsn\ is hyperbolic.
\end{pro}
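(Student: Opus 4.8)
The plan is to use the surgery description from Proposition~\ref{pro:MT7L}, which realizes $E(\krsn)$ as a Dehn filling of $E(\mathcal{C}_7)$, and then invoke Thurston's hyperbolic Dehn surgery theorem. First I would recall that the minimally twisted seven component link $\mathcal{C}_7$ is itself hyperbolic; this is a known fact (and can be verified, e.g., by SnapPy), so $E(\mathcal{C}_7)$ admits a complete finite-volume hyperbolic structure with seven cusps. By Thurston's Dehn surgery theorem, all but finitely many slopes on each cusp yield a hyperbolic manifold when the remaining cusps are left unfilled; more precisely, there is a finite ``exceptional set'' of slopes on each of the six boundary components we intend to fill, outside of which the filled manifold is hyperbolic. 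The six filling slopes in our situation are $r$, $-s$, $n$, $-n-1$, $s$, $-r$ (with respect to the framings recorded on the right of Figure~\ref{fig:surgerydiagram}); as $r, s, n \to \infty$ each of these slopes has arbitrarily large length on the corresponding cusp (since there are only finitely many slopes of bounded length), hence eventually avoids the exceptional set.

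The key steps, in order: (1) State that $E(\mathcal{C}_7)$ is hyperbolic, citing the relevant reference for the minimally twisted chain link (or noting the standard verification). (2) Apply Proposition~\ref{pro:MT7L} to write $E(\krsn)$ as the filling of six cusps of $E(\mathcal{C}_7)$ along the listed slopes. (3) Quote Thurston's hyperbolic Dehn surgery theorem in the form: there is a constant such that if each of the six filling slopes has length greater than that constant on its cusp, then the filled manifold (with the seventh cusp, corresponding to $K^*$, still unfilled) is hyperbolic. (4) Observe that the slopes $r, -s, n, -n-1, s, -r$, measured on $\partial E(\mathcal{C}_7)$ with respect to the fixed framing, tend to infinity in length as $r, s, n \to \infty$, because on any fixed cusp only finitely many slopes have length below any given bound. (5) Conclude that for $r$, $s$, and $n$ sufficiently large all six slopes are long enough, so $E(\krsn) \cong E(K^*)$ minus the filled cusps is hyperbolic, hence $\krsn$ is a hyperbolic knot.

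One technical point to handle carefully is that Thurston's theorem is usually stated for filling \emph{all} cusps of a cusped hyperbolic manifold, whereas here we leave one cusp (that of $K^*$) unfilled. The resolution is standard: one may either apply the version of the hyperbolic Dehn surgery theorem for partial fillings (the filled manifold remains hyperbolic with cusps, as long as the filling slopes on the filled cusps are sufficiently long), or alternatively note that $E(K^*)$ is the exterior of a knot in a lens space and the six-cusp filling of $E(\mathcal{C}_7)$ already has a cusp, so geometrization together with the fact that long fillings cannot create essential tori or spheres (again a consequence of Thurston's work, cf.\ the $6$-theorem of Agol and Lackenby for explicit bounds) gives hyperbolicity. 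I would cite the appropriate form and not belabor it.

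The main obstacle is step (4): making sure the six filling slopes genuinely have length tending to infinity. The slopes are recorded in a framing that is fixed once and for all (independent of $r,s,n$), so on each of the six cusps the slope is literally the curve $p\,\mu + q\,\lambda$ for the relevant integer ($r$, $-s$, etc.) times one basis curve plus a fixed curve; as this integer grows the slope ranges over infinitely many distinct isotopy classes on a fixed cusp torus, and since the number of slopes of length at most any fixed bound on a fixed Euclidean cusp is finite, the length must go to infinity. The only subtlety is confirming that the framings used to express the slopes are indeed independent of $n$ (they are, by construction in Proposition~\ref{pro:MT7L}, since the homeomorphism $E(\mathcal{L}_7)\cong E(\mathcal{C}_7)$ and its effect on slopes come from \cite{BakerBerge} and do not involve $n$), and that distinct values of the parameters give distinct slopes on the respective cusps, which is clear from the integer coefficients. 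Once this is in place, the conclusion is immediate.
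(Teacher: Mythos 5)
Your proposal is correct and follows essentially the same route as the paper: the paper cites Neumann--Reid for the hyperbolicity of $E(\mathcal{C}_7)$ and then invokes Proposition~\ref{pro:MT7L} together with (an implicit appeal to) Thurston's hyperbolic Dehn surgery theorem for partial fillings, exactly as you do. Your write-up simply makes explicit the details the paper compresses into a single ``therefore''---the partial-filling version of the theorem, the independence of the framings from $r,s,n$, and the fact that the six filling slopes have length tending to infinity---all of which are handled correctly.
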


\begin{proof}
By Neumann and Reid~\cite[Theorem~5.1]{NeumannReid}, $E(\mathcal{C}_7)$ 
is hyperbolic ($\mathcal{C}_7$ is denoted $C(7,-4)$ in~\cite{NeumannReid}).
%It is well known that for sufficiently large surgery coefficients a
%The $2\pi$ Theorem of Gromov and Thurston (see~\cite{2pi})
%implies that there is a finite set of slopes on each boundary
%component of $E(\mathcal{C}_7)$ so that any surgery avoiding
%these slopes yield a manifold admitting a negatively curved metric.  Note that the
%construction of the negatively curved metric is local in the sense that the cusps of $E(\mathcal{C}_7)$ 
%that we wish to fill are truncated, and then a negatively curved
%metric that extends the hyperbolic metric is constructed on each of 
%the attached solid tori.  Hence the finite set of slopes 
%on one cusp does not change after filling other cusps.  If $r,s$ and $n$ are sufficiently large,
%we may assume these finite sets are all avoided.  
Therefore, using Proposition~\ref{pro:MT7L}, 
%after filling 6 components of $E(\mathcal{C}_7)$ we obtain a negatively curved
%metric on $E(\krsn)$.  By Thurston's Geometrization of Haken manifolds,
for $r$, $s$, and $n$ sufficiently large,
$E(\krsn)$ is hyperbolic.
\end{proof}

%
%\begin{figure}[h!]
%\includegraphics[height=4in]{bigmess.eps}
%\caption{A knotted mess.... This can be omitted.  It's just an alternative for $\mathcal{L}_7$ that has the trefoil present too.}
%\label{fig:mess}
%\end{figure}

\bibliography{BKR}
\bibliographystyle{plain}

\end{document}